\newtheorem{theorem}{Theorem}[section]
\newtheorem{lemma}[theorem]{Lemma}
\newtheorem{proposition}[theorem]{Proposition}
\newtheorem{problem}{Problem}
\newtheorem{remark}{Remark}
\newtheorem{assumption}{Assumption}
\newcommand{\mc}{\mathcal}
\newcommand{\real}{\mathbb{R}}
\newcommand{\realpos}{\mathbb{R}_{\geq 0}}
\newcommand{\tsp}{\mathsf{T}} 
\newcommand{\inv}{{\negat 1}} 
\newcommand{\negat}{\scalebox{0.75}[.9]{\( - \)}}
\newcommand*{\QEDB}{\hfill\ensuremath{\square}}%  empty square
\newcommand*{\QEDBL}{\hfill\ensuremath{\blacksquare}}% Black square
\newcommand\oprocendsymbol{\hbox{$\square$}}
\newcommand\oprocend{\relax\ifmmode\else\unskip\hfill%
\fi\oprocendsymbol}
\newcommand{\map}[3]{#1: #2 \rightarrow #3}
\newcommand{\sbs}[2]{{#1}_{\textup{#2}}}
\newcommand{\sps}[2]{{#1}^{\textup{#2}}}
\newcommand{\norm}[1]{\Vert #1 \Vert}
\def\BibTeX{{\rm B\kern-.05em{\sc i\kern-.025em b}\kern-.08em
    T\kern-.1667em\lower.7ex\hbox{E}\kern-.125emX}}
\begin{document}
\title{Time-Varying Optimization of LTI Systems 
via Projected Primal-Dual Gradient Flows}
\author{Gianluca Bianchin, Jorge Cort\'{e}s, Jorge I. Poveda,  and Emiliano Dall'Anese\thanks{G. Bianchin, J. I. Poveda, and E. Dall'Anese are with the Department of Electrical, Computer and Energy Engineering, University of Colorado Boulder. J. Cort\'{e}s is with the Department of Mechanical and Aerospace Engineering at the University of California San Diego. This work was supported by the National Science Foundation (NSF) through the Awards CMMI 2044946 and 2044900 and CRII: CNS-1947613, and  by  the  National  Renewable  Energy  Laboratory through the subcontract UGA-0-41026-148.} \hspace{-1cm}}

\maketitle

\begin{abstract}
This paper investigates the problem of regulating, at every time, a 
linear dynamical system to the solution trajectory of a time-varying 
constrained  convex  optimization problem.  
The proposed feedback controller is based on an adaptation of the 
saddle-flow dynamics, modified to take into account 
projections on constraint sets and output-feedback from the plant.
We derive sufficient conditions on the tunable parameters of the 
controller (inherently related to the time-scale separation between
plant and controller dynamics) to guarantee exponential   
input-to-state stability of the closed-loop system. 
The analysis is tailored to the case of time-varying strongly convex 
cost functions and polytopic output constraints. The theoretical 
results are further validated in a ramp metering control  problem in 
a network of traffic highways. %Our results show that the proposed online controller can outperform other well-known techniques, such as model-predictive control (MPC) and ALINEA.
\end{abstract}
%
%\begin{IEEEkeywords}
%These are the keywords.
%\end{IEEEkeywords}
%

\section{Introduction}
\label{sec:introduction}

\IEEEPARstart{T}{his} paper investigates the problem of online 
optimization of linear time-invariant (LTI) systems. 
The objective is to design an output feedback controller
to  steer the inputs and outputs of the system towards the 
solution trajectory of a time-varying optimization problem  
(see Fig. 1). Such problems correspond to scenarios where cost and 
constraints may change over time to reflect dynamic performance 
objectives or simply to take into account time-varying unknown 
disturbances entering the system. This setting emerges  in many 
engineering applications, including power systems 
\cite{MC-ED-AB:20,menta2018stability}, transportation networks 
\cite{GB-FP:19-tits,kutadinata2016enhancing}, and communication 
systems \cite{SL-DL:99}.

The design of  feedback controllers inspired from optimization algorithms has received significant attention during the last decade~\cite{Jokic2009controller,brunner2012feedback,lawrence2018optimal,lawrence2018linear,MC-ED-AB:20,menta2018stability,zheng2019implicit,hauswirth2020timescale,li2020optimal,bianchin2020online}. 
While most of the existing works focus on
the design of optimization-based controllers for static 
problems~\cite{Jokic2009controller,brunner2012feedback,lawrence2018optimal,lawrence2018linear,menta2018stability,hauswirth2020timescale,li2020optimal}, or consider unconstrained time-varying problems~\cite{zheng2019implicit,bianchin2020online}, an open research question is whether controllers can be synthesized  to track solutions trajectories of time-varying problems with input and output constraints. Towards this direction, in this paper we consider 
optimization problems with a time-varying strongly convex cost, 
time-varying linear constraints on the output, and convex constraints 
on the input.
We leverage online saddle-point dynamics for controller synthesis, 
and we establish the input-to-state stability \cite{ES-YW:95} property
for the system resulting from interconnecting the controller with the 
dynamical system.
In particular, we leverage tools from singular perturbation 
theory~\cite{HKK:96}  
to provide sufficient conditions on the tunable controller 
parameters  to guarantee  tracking of the optimal solution trajectory. 
We remark that, while \cite{qu2018exponential,Mihailo19,JC-SN:19,nguyen2018contraction,cisneros2020distributed} show that primal-dual dynamics
for have an exponential rate of  convergence, the main 
challenges here are to derive exponential 
stability results for problems that are time-varying and 
where  primal-dual dynamics are interconnected with a 
dynamical system subject to unknown disturbances 
(as in Fig.~\ref{fig:controllerBlocks}).

\begin{figure}[t]
\centering 
\includegraphics[width=.8\columnwidth]{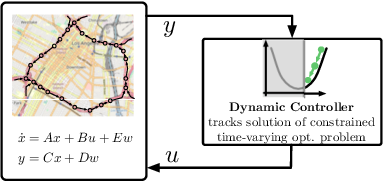}
\caption{Online saddle-flow optimizer used as an output 
feedback controller for LTI systems subject to unknown time-varying 
disturbances. $x$ denotes the system state, $u$ is the control input, 
$w$ denotes an unknown and unmeasurable disturbance, and $y$ is 
the system output.}
\vspace{-.6cm}
\label{fig:controllerBlocks}
\end{figure}

\textit{Related work}. 
In the case of static plants (i.e., where the dynamics of the 
system are infinitely fast), controllers conceptually-inspired from 
continuous-time saddle-point dynamics (or flows) are studied in 
\cite{Jokic2009controller} for optimization problems with 
time-invariant costs and constraints on the system outputs, whereas 
more general saddle-point flows are studied 
in~\cite{brunner2012feedback,li2020optimal,SaddlePointMonotone}, and
\cite[Sec. 3]{PovedaLinaAuto21}.
While the above works focus on optimization problems with static 
plants, the authors in
\cite{lawrence2018optimal,lawrence2018linear,hauswirth2020timescale,menta2018stability} prove that gradient-flow dynamics can be used 
as feedback controllers for dynamical systems in the case of 
unconstrained optimization problems with time-invariant costs. 
The work \cite{hauswirth2020timescale} also extends these results to
the case of constraints on the system inputs by using projected 
gradient flows. Constraints on the system outputs are considered in 
\cite{MC-ED-AB:20}, together with a controller inspired from 
primal-dual dynamics based on the Moreau envelope.
For time-varying unconstrained optimization 
problems, prediction-correction algorithms are used 
in~\cite{zheng2019implicit}. Exponential rates of convergence were 
proved for the first time in~\cite{bianchin2020online} for dynamic 
controllers based on gradient flows and accelerated hybrid dynamics.

In terms of classes of  plants, stable LTI systems are considered in
\cite{menta2018stability,bianchin2020online,MC-ED-AB:20}, stable 
nonlinear systems in~\cite{hauswirth2020timescale}, 
input-linearizable systems in~\cite{zheng2019implicit}, and 
input-affine nonlinear system in \cite{brunner2012feedback}.  
Finally, \cite{liao2020time,figura2020instant} 
consider online implementations of optimization problems arising in 
model predictive control.

\textit{Contributions}. This work features three main contributions.
C1) We design an output feedback controller, inspired from 
primal-dual dynamics, to regulate a dynamical system to 
the solution trajectory of a time-varying constrained optimization 
problem without requiring information or measurements of the 
external disturbances entering the state equation. For problems 
with equality constraints, the controller is designed 
based on the classical Lagrangian function. Instead, for problems 
with inequality constraints, we employ a regularized 
Lagrangian~\cite{JK-AN-UVS:11} to guarantee exponential convergence 
to an approximate KKT trajectory.  
C2) We consider constraints on the system input and we propose a 
novel projected primal-dual feedback controller that guarantees 
constraint satisfaction. 
Differently from using the classical projection on the 
tangent cone, the proposed controller yields trajectories that are
continuously differentiable, which allows us to simplify the 
analysis and to establish strong robustness guarantees. 
As a minor contribution, we demonstrate that the proposed framework 
is applicable to more-general LTI systems, including switched 
systems with common quadratic Lyapunov functions. 
C3) We apply the proposed controllers to solve a ramp metering problem 
in traffic systems. We compare our results with state-of-the-art 
controllers, including ALINEA \cite{MP-AK:02} and 
model predictive control, illustrating the advantages of our method. 

We emphasize that, relative to~\cite{MC-ED-AB:20}: (i) our sufficient  
conditions for convergence are easier to check as they do not require 
to numerically solve a linear matrix inequality, and (ii)  our 
framework does not require to compute the Moreau envelope. Relative 
to~\cite{Jokic2009controller,li2020optimal,brunner2012feedback,hauswirth2020timescale}, we account for time variability in the cost 
functions and in the disturbances, and we prove  exponential 
convergence.  Relative 
to~\cite{cisneros2020distributed,edwards2000input}, we investigate 
saddle-point dynamics when coupled with a dynamical system. 

\textit{Organization.}
We present in Section~\ref{sec:problem} our problem formulation. 
Section~\ref{sec_projected_systems} 
develops a projected primal-dual output feedback controller for 
problems with input constraints and output inequality constraints.
Section~\ref{sec:no_projection} considers problems with output 
equality constraints.
Section~\ref{sec_application_metering} presents numerical results by
focusing on a ramp metering problem in traffic systems.
Finally, Section~\ref{sec_conclusions} summarizes our conclusions.

% \footnote{\textbf{Notation.}
\textit{Notation.}
Given vectors $x\in \real^n$ and $u\in\real^m$, we let $(x,u) \in \real^{n+m}$ denote their concatenation. We use $\bar{\lambda}(M)$ and $\underline{\lambda}(M)$ to denote the  largest and smallest eigenvalues of the symmetric matrix $M$, respectively. Finally,  
$P_\Omega:\mathbb{R}^\sigma \to \mathbb{R}^\sigma$ denotes the 
Euclidean projection of $z$ onto a closed convex set
$\Omega \subseteq \real^\sigma$, namely $P_\Omega (z) := \arg \min_{v \in \Omega} \norm{z-v}.$
% }
% For $u \in \real^n$, we denote by $[u]_i$ the $i$-th entry of $u$, where $i \in \until n$; for a matrix $U \in \real^{n \times m}$, we use $[U]_{ij}$ to denote the entry $(i,j)$ of $U$.}

%%%%%%%%%%%%%%%%%%%%%%%%%%%%%%%%%%%%%%%%%%%%%%%%%
\section{Problem Formulation}
\label{sec:problem}
We consider LTI dynamical systems described by:
\begin{equation}
\begin{split}
\label{eq:plantModel}
\dot x &=  A x + B u + E w_t, \\
y &= C x + D w_t,
\end{split}
\end{equation}
where $\map{x}{\realpos}{\real^n}$ is the state, 
$\map{u}{\realpos}{\real^m}$ is the input, 
$\map{y}{\realpos}{\real^p}$ is the output,
and $\map{w_t}{\realpos}{\real^q}$ is an unknown 
and time-varying exogenous input or disturbance (the 
notation $w_t$ emphasizes the dependence on time).
We make the following stability assumption on the plant.

%\vspace{.1cm}

%
\begin{assumption}
\label{ass:stabilityPlant}
The matrix $A$ is Hurwitz stable, namely, for any 
$Q_x \in \real^{n\times n}, Q_x \succ 0$, there exists 
$P_x \in \real^{n \times n}, P \succ 0,$ such that 
$A^\tsp P_x+P_xA = -Q_x$.
% Moreover, the columns of $C$ are linearly independent.
\QEDB
\end{assumption}
\vspace{.1cm}

Under Assumption \ref{ass:stabilityPlant}, for fixed vectors 
$\sbs{u}{eq} \in \real^m$, $\sbs{w}{eq} \in \real^q$, 
\eqref{eq:plantModel} has a unique stable equilibrium point 
$\sbs{x}{eq} =-A^{-1}(B \sbs{u}{eq}+E \sbs{w}{eq})$. 
Moreover, at equilibrium, the relationship between system inputs and outputs is given by the algebraic relationship:
\begin{align}
\label{eq:yTransferFunctions}
\sbs{y}{eq} = \underbrace{-C A^\inv B}_{:=G} \sbs{u}{eq} 
+ \underbrace{(D -C A^\inv E)}_{:=H} \sbs{w}{eq}.
\end{align}
Given any time-varying and unknown exogenous input $w_t$ to 
\eqref{eq:plantModel}, we focus on the problem of regulating the 
plant to the solutions of the following time-varying optimization 
problem:
\vspace{-.1cm}
\begin{subequations}
\label{opt:objectiveproblem}
\begin{align}
\label{opt:objectiveproblem-a}
(u_t^*, y_t^*) \in  
\underset{\bar u \in \mc U, ~ \bar y \in \real^p}{\arg \min}  ~~ & 
\phi_t (\bar u) + \psi_t (\bar y)\\
\label{opt:objectiveproblem-b}
\text{s.t.} ~~~~ & \bar y = G  \bar u  +H w_t\\
\label{opt:objectiveproblem-c}
& K_t \;  \bar y \leq e_t, 
\end{align}
\end{subequations}
where for all $t \in \realpos$, 
$\map{\phi_t}{\real^m}{\real}$,
$\map{\psi_t}{\real^p}{\real}$. Moreover, the maps
$t \mapsto K_t \in \real^{r \times p}$ and
$t \mapsto e_t \in \real^r$ describe a time-varying  output 
constraint, while $\mc U \subseteq \real^m$ denotes a closed and convex set 
describing constraints on the input.
Problem \eqref{opt:objectiveproblem} formalizes a regulation 
problem, 
where the objective is to select an optimal input-output pair 
$(u^*_t, y^*_t)$ that minimizes the cost specified by the 
loss functions $\phi_t$ and $\psi_t$.  
We note that, because cost functions and constraints are 
time-varying, the solutions of \eqref{opt:objectiveproblem} 
are also time-varying, and thus they characterize optimal 
trajectories. We impose the following regularity assumptions on the 
temporal evolution of~\eqref{opt:objectiveproblem}.

\vspace{.1cm}

\begin{assumption}
\label{ass:lipschitzAndConvexity}
The following properties hold.
\begin{enumerate}
\item[(a)] The functions $u\mapsto \phi_t(u)$  and  
$y\mapsto \psi_t(y)$ are continuously differentiable, uniformly in $t \in \realpos$.
\item[(b)] The function $u \mapsto \phi_t(u)$ is $\mu_u$-strongly
convex, uniformly in $t \in \realpos$.
\item[(c)] There exist $\ell_u,\ell_y >0$ such that for 
every $u, u' \in \real^m$ and $y, y' \in \real^p$, 
$\norm{\nabla \phi_t(u) - \nabla \phi_t(u')} 
\leq \ell_u \norm{u-u'}$, 
$\norm{\nabla \psi_t(y) - \nabla \psi_t(y')} \!\!\leq\!\! \;\ell_y \norm{y-y'}$,
uniformly in $t \in \realpos$. 
\item[(d)]
For all $u \in \real^m$, $y \in \real^p$,
$t\mapsto \nabla \phi_t(u)$ and $t\mapsto \nabla \psi_t(y)$ 
are locally Lipschitz. %, uniformly in $t \in \realpos$.
\QEDB
\end{enumerate}
\end{assumption}
\begin{assumption}
\label{ass:slaterConditionAndRank}
Problem~\eqref{opt:objectiveproblem} is feasible, and Slater's 
condition \cite[Assumption 1]{JK-AN-UVS:11} holds for each $t\in \realpos$.  \QEDB
\end{assumption}
\vspace{.1cm}

\begin{assumption}
\label{ass:timeRegularity}
The following regularity properties~hold.
\begin{enumerate}
\item[(a)]  $t \mapsto w_t$ is locally absolutely continuous. 
% and locally~Lipschitz.  
\item[(b)]  The functions $t \mapsto [K_t]_{ij}$ and $t \mapsto [e_t]_{i}$
$i = 1, \dots, r$, $j = 1, \ldots, p$, are locally Lipschitz, and 
there exists 
$\overline{K} \in \realpos$, $\bar{e} \in \realpos$, such that 
$\norm{K_t} < \bar K$ and $\norm{e_t} < \bar e$.
\QEDB
\end{enumerate}
\end{assumption}
\vspace{.1cm}

Under Assumptions 
\ref{ass:lipschitzAndConvexity}--\ref{ass:slaterConditionAndRank}, the 
minimizer $(u^*_t, y^*_t)$ of 
\eqref{opt:objectiveproblem} is unique  for every $t \in \realpos$
\cite[Page 2]{JK-AN-UVS:11}, while Assumption \ref{ass:timeRegularity} 
guarantees that inputs and constraints of 
\eqref{opt:objectiveproblem} vary continuously in time. 
The problem focus of this work is formalized next.

\vspace{0.1cm}
\begin{problem}
\label{prob:1}
Design a dynamic output-feedback controller for~\eqref{eq:plantModel}
% of the form $\dot u = \mathcal{C}(u,y)$ 
such that the inputs and outputs of \eqref{eq:plantModel} converge 
exponentially to the time-varying optimizer of 
\eqref{opt:objectiveproblem}, up to an asymptotic error that accounts 
for the temporal variability of both the optimizer and of the unknown 
disturbance. \QEDB
\end{problem}
\vspace{0.1cm}
% \begin{problem}
% \label{prob:1}
% Design an output-feedback controller $\dot u = \mathcal{C}(u,y)$  such that the input and output of \eqref{eq:plantModel} converge exponentially to the time-varying optimizer of \eqref{opt:objectiveproblem}, up to a given asymptotic error. In particular, the error may depend on the temporal variability of both the optimizer and of the unknown disturbance. \QEDB
% \end{problem}
% \vspace{0.1cm}

\section{Closed-loop Projected Saddle-Point Flows}
\label{sec_projected_systems}
In this section, we present our controller synthesis method 
and we establish explicit convergence error bounds.

% \subsection{Regularized Lagrangian}
\subsection{Controller Synthesis}
For controller synthesis, we employ a regularized Lagrangian 
function and we use a controller structure that relies on a 
modification of 
the saddle-point flow dynamics \cite{Jokic2009controller}.
Consider the following Lagrangian function 
for \eqref{opt:objectiveproblem}: 
\begin{align*}
\mc L_{t}(u,\lambda) &:= \phi_t(u) + \psi_t(Gu+Hw_t) \\ 
&\quad\quad\quad\quad\quad\quad\quad
+ \lambda^\tsp(K_t(Gu+Hw_t)-e_t),    
\end{align*}
where 
$\lambda \in \realpos^r$ denotes the vector of dual variables. 
We define the regularized Lagrangian function as follows:
\begin{align}
\label{eq:LagrangianInequalityAugmented}
\mc L_{\nu,t}(u,\lambda) := &\mc L_{t}(u,\lambda) 
- \frac{\nu}{2} \norm{\lambda}^2,
\end{align}
where $\nu \in \real_{>0}$. The regularization term 
$- \frac{\nu}{2} \norm{\lambda}^2$ has the effect of making the function
$\mc L_{\nu,t}(u,\lambda)$ strongly concave in $\lambda$, for any 
$u \in \real^m$ (see~\cite{JK-AN-UVS:11}). 
As a result, the regularization term induces a saddle-point map that is
strongly monotone, uniformly in time \cite{JK-AN-UVS:11}. 
% As characterized in Section \ref{sec:trackingProjected}, this property 
% is fundamental to provide exponential convergence guarantees for our 
% controller. 
On the other hand, the us of a regularization term comes at 
the cost of perturbing the saddle points. To this aim, we let
\begin{align}
\label{eq:defSaddlePoint}
z_t^* &:= (u_t^*, \lambda_t^*), & 
z_{\nu,t}^* :&= (u_{\nu,t}^*, \lambda_{\nu,t}^*),
\end{align}
denote any saddle-point of $\mc L_{t}(u,\lambda)$ and  the  saddle 
point of $\mc L_{\nu,t}(u,\lambda)$, respectively. We quantify the 
error due to regularization in the following result (adapted 
from~\cite[Prop.  3.1]{JK-AN-UVS:11}).
\begin{lemma}
\label{lem:regularizationError}
Let Assumptions 
\ref{ass:lipschitzAndConvexity}-\ref{ass:slaterConditionAndRank} 
hold. For each $t \in \realpos$, the following bound holds:
\begin{align}
\label{eq:shiftOptimizer}
\mu_u \norm{u_{\nu,t}^* - u_t^*}^2 + \frac{\nu}{2} \norm{\lambda_{\nu,t}^*}^2 \leq \frac{\nu}{2} \norm{\lambda_t^*}^2,
\end{align}
where $(u_t^*, \lambda_t^*)$ and 
$(u_{\nu,t}^*, \lambda_{\nu,t}^*)$ are as in \eqref{eq:defSaddlePoint}.
In particular, inequality \eqref{eq:shiftOptimizer} implies 
that 
$\norm{u_{\nu,t}^* - u_t^*} \leq \sqrt{\frac{\nu}{2 \mu_u}} \norm{\lambda^*_t}$. 
\end{lemma}

\begin{remark}
Lemma \ref{lem:regularizationError} shows that the error induced by 
the regularization term is bounded by the norm of the optimal 
multipliers of the non-regularized problem. Consequently, when the 
optimal solution is strictly inside the feasible set, then 
$\lambda_t^*=0$ and  the solution $u_{\nu,t}^*$ coincides with 
$u_t^*$.  
\QEDB
\end{remark}
\smallskip

% \subsection{Controller Synthesis}

For controller synthesis we define the following functions, which can 
be interpreted as modified gradients of 
\eqref{eq:LagrangianInequalityAugmented}:
\begin{subequations}
\label{eq:LuProjection}
\begin{align}
\label{eq:LuProjection-a}
L_{u,t}(u,y,\lambda) & := \nabla \phi_t(u) + G^\tsp \nabla \psi_t(y) 
+ G^\tsp K_t^\tsp \lambda,\\
\label{eq:LuProjection-b}
L_{\lambda,t}(y,\lambda) & := K_t y - e_t - \nu \lambda,
\end{align}
\end{subequations}
where we note that, with respect to the gradients of $\mc L_{\nu,t}$,
in $L_{u,t}$ and $L_{\lambda,t}$ the map $Gu+Hw_t$
has been replaced by variable $y$. 
Using~\eqref{eq:LuProjection}, we propose the following \emph{online 
projected primal-dual controller} applied to \eqref{eq:plantModel} 
(see Fig.~\ref{fig:controllerBlocks}):
\vspace{-.5em}
\begin{subequations}
\label{eq:primal-DualProjection}
\begin{align}
\label{eq:primal-DualProjection-a}
\varepsilon \dot x &= A x + B u + E w_t, 
\quad\quad\quad\quad\quad y = C x + D w_t,\\    
\label{eq:primal-DualProjection-b}
\dot u &=  P_\mc U \big(u- \eta L_{u,t}(u,y,\lambda)\big) - u,\\
\label{eq:primal-DualProjection-c}
\dot \lambda &= P_{\mathcal{C}} \big(\lambda + 
\eta L_{\lambda,t}(y,\lambda)\big) - \lambda,
\end{align}
\end{subequations}
where $\varepsilon, \eta >0$ are plant and controller gains that  induce a time-scale separation between the plant and the controller, 
$v \mapsto P_{\Omega}(v)$ denotes the Euclidean projection onto the closed and 
convex set $\Omega$, and $\mc C:=\mathbb{R}_{\geq0}^r$. Three
important observations on 
\eqref{eq:primal-DualProjection-b}-\eqref{eq:primal-DualProjection-c} 
are in order.
First, the structure of the controller is inspired by first-order
optimization methods, where the algebraic map $Gu+Hw_t$ has been 
replaced by measurements of the output $y$ (thus making the algorithm 
``online''). Second, the controller
does not require any knowledge regarding the exogenous disturbance 
$w_t$. Third, even when the LTI system and the saddle-flow 
dynamics are stable (in open-loop), the interconnection
\eqref{eq:primal-DualProjection}  is not guaranteed to be stable without further conditions on the controller parameters~\cite{HKK:96}.

\begin{remark} 
The choice of dualizing the constraint 
$K_t  (Gu+Hw_t) \leq e_t$ allows us to naturally enforce 
constraints that are time-varying and parametrized by
the unknown vector~$w_t$. This is because the steady-state 
relationship $Gu+Hw_t$ is replaced by instantaneous feedback 
$y_t$ in~\eqref{eq:LuProjection-b}. The alternative route of combining 
the constraint $K_t (Gu+Hw_t) \leq e_t$ with the convex constraint 
$u \in \mc U$ and recast both of them as a convex constraint of the 
form  $u \in \mc U \cap \{u: K_t (Gu+Hw_t) \leq e_t\}$ would 
result in an unknown constraint set, thus making the computation
of the projection not possible. 
\QEDB
\end{remark}
\smallskip

\begin{remark}
Given a closed convex set $\Omega \subseteq \real^\sigma$ and a vector
field $\map{F}{\Omega}{\real^\sigma}$, the standard projected 
dynamical system \cite{AN-DZ:12} associated with $F(v)$ is given by:
\begin{align}
\label{eq:standardProjectedDynamics}
\dot v &= \lim_{\delta \rightarrow 0^+} 
\frac{P_\Omega(v + \delta F(v))-v}{\delta}.
\end{align}
We note that, in general, \eqref{eq:standardProjectedDynamics} is a 
discontinuous dynamical system. On the contrary, the vector field in
\eqref{eq:primal-DualProjection-b}-\eqref{eq:primal-DualProjection-c} 
is Lipschitz continuous. For static optimization problems, similar 
dynamics have been studied in e.g. 
\cite{gao2003exponential,YSX-JW:00}. However, to the best of our knowledge, 
\eqref{eq:primal-DualProjection-b}-\eqref{eq:primal-DualProjection-c} 
is the first projected output feedback controller with 
Lipschitz-continuous vector fields.  \QEDB
\end{remark}

\vspace{0.1cm}
Fig.~\ref{fig:projectionIllustration} provides a representative 
example of the trajectories produced by the considered projected 
output feedback controllers, and compares them with those generated by 
a controller with a discontinuous projection of the form \eqref{eq:standardProjectedDynamics}.

\begin{figure}[t]
\centering 
\includegraphics[width=.7\columnwidth]{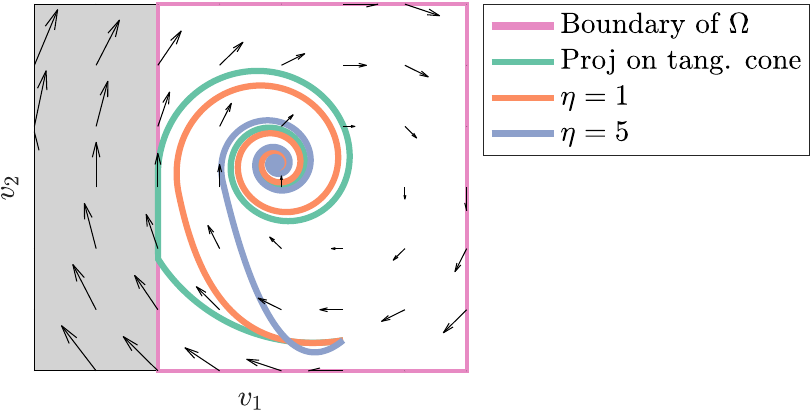}
\caption{Comparison between trajectories of 
\eqref{eq:standardProjectedDynamics} and of the smooth 
projection \eqref{eq:primal-DualProjection} for a 2-D vector field. 
Black arrows show the vector field.}
\label{fig:projectionIllustration}
\vspace{-.5cm}
\end{figure}
%

%\vspace{-0.3cm}
\subsection{Stability and Tracking Analysis}
\label{sec:trackingProjected}
In this section we characterize the transient behavior 
of~\eqref{eq:primal-DualProjection}. 
To this aim, in what follows we use the notation:
\begin{align}
\label{eq:z-zNuTilde}
z &:= (u, \lambda), &
\tilde{z}_\nu& := z-z_{\nu,t}^*,
\end{align}
to denote the joint controller state and the controller tracking 
error, where $z_{\nu,t}^*$ is as in \eqref{eq:defSaddlePoint}. 
Similarly, we use
\begin{align}
\label{eq:xiNu-xiNuStar}
\xi &:= (x, z), &
\xi^*_{\nu,t} &:= (x^*_{\nu,t}, z^*_{\nu,t}), &
\tilde{\xi}_\nu &:=\xi-\xi_{\nu,t}^*,
\end{align}
to denote the joint state of \eqref{eq:primal-DualProjection}, the 
saddle-point of \eqref{eq:LagrangianInequalityAugmented}, with 
$x_{\nu,t}^*=-A^\inv(Bu_{\nu,t}^*+Hw_t)$, and the 
tracking error, respectively.
We begin by characterizing the existence of solutions.

\smallskip
\begin{lemma}
\label{lem:differentiableSolutions}
Let Assumptions~\ref{ass:lipschitzAndConvexity}--\ref{ass:timeRegularity} 
hold. 
For each $\xi_0=(x_0,u_0,\lambda_0)\in \real^{n+m+r}$, there exists a
unique solution $\xi(t)$ of \eqref{eq:primal-DualProjection} with 
$\xi(0) = \xi_0$.
Moreover, $\xi$ is continuously differentiable and it is
maximal, i.e., it is defined for all $t \in \realpos$.
\end{lemma}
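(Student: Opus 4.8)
The plan is to recast the closed-loop system as a non-autonomous ordinary differential equation $\dot \xi = f(t,\xi)$ on $\real^{n+m+r}$ and to verify the hypotheses of the standard existence--uniqueness theory. First I would substitute the algebraic output relation $y = Cx + Dw_t$ into \eqref{eq:primal-DualProjection-b}--\eqref{eq:primal-DualProjection-c} to eliminate $y$, obtaining the explicit vector field
\begin{align*}
f(t,\xi) := \begin{bmatrix}
\tfrac{1}{\varepsilon}(Ax+Bu+Ew_t)\\
P_{\mc U}\big(u-\eta L_{u,t}(u,Cx+Dw_t,\lambda)\big)-u\\
P_{\mc C}\big(\lambda+\eta L_{\lambda,t}(Cx+Dw_t,\lambda)\big)-\lambda
\end{bmatrix},
\end{align*}
with $\xi = (x,u,\lambda)$. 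It then suffices to show that (i) $\xi \mapsto f(t,\xi)$ is globally Lipschitz with a constant uniform in $t$, and (ii) $t\mapsto f(t,\xi)$ is continuous for each fixed $\xi$.

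For (i), the key fact is that the Euclidean projection $P_\Omega$ onto a closed convex set is nonexpansive, so $\norm{P_\Omega(a)-P_\Omega(b)}\le\norm{a-b}$. Combining this with the Lipschitz bounds on $\nabla\phi_t$ and $\nabla\psi_t$ from Assumption~\ref{ass:lipschitzAndConvexity}(c) (with constants $\ell_u,\ell_y$ uniform in $t$), the uniform bound $\norm{K_t}<\bar K$ from Assumption~\ref{ass:timevariabilityc}, and the fact that $A,B,E,C,D,G$ are constant matrices, each block of $f(t,\cdot)$ is Lipschitz in $\xi$ with a constant independent of $t$; hence so is $f(t,\cdot)$. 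For (ii), I would use that $t\mapsto w_t$ (Assumption~\ref{ass:w}), $t\mapsto K_t,e_t$ (Assumption~\ref{ass:timevariabilityc}), and $t\mapsto \nabla\phi_t(u),\nabla\psi_t(y)$ (Assumption~\ref{ass:lipschitzAndConvexity}(d)) are locally Lipschitz, and that composition with the continuous map $P_\Omega$ preserves continuity; this makes $t \mapsto f(t,\xi)$ continuous. Together with the uniform-in-$t$ Lipschitz property, $f$ is jointly continuous on $\realpos \times \real^{n+m+r}$.

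With $f$ jointly continuous and locally (indeed globally) Lipschitz in $\xi$, the Picard--Lindel\"of theorem yields, for each initial condition $\xi_0$, a unique solution on a maximal interval $[0,T_{\max})$; since $f$ is continuous in $t$, the solution satisfies $\dot\xi = f(t,\xi)$ pointwise and is therefore continuously differentiable. Finally, to establish global existence (i.e.\ $T_{\max}=\infty$), I would exploit the uniform Lipschitz constant $L$ to obtain the linear growth bound $\norm{f(t,\xi)} \le c_1(t) + L\norm{\xi}$, where $c_1(\cdot):=\norm{f(\cdot,0)}$ is bounded on every compact time interval by the continuity in $t$ established in (ii). A Gr\"onwall argument then rules out finite-time blow-up on any $[0,T]$, so the maximal solution is defined on all of $\realpos$.

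The main obstacle is conceptually mild but must be handled with care: verifying that composing the nonexpansive projection with the Lipschitz gradient maps yields a globally Lipschitz vector field with a constant \emph{uniform} in $t$. This uniformity is precisely what upgrades local existence to global existence and, together with continuity of $f$ in $t$, guarantees the $C^1$ regularity claimed. By contrast, had the projection entered through the discontinuous dynamics~\eqref{eq:standardProjectedDynamics}, this argument would fail and one could at best recover Carath\'eodory (absolutely continuous) solutions, which underscores the role of the Lipschitz-continuous controller design.
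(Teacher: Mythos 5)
Your proposal is correct and follows essentially the same route as the paper's (much more condensed) proof: both rest on the nonexpansiveness of the projection combined with the uniform-in-$t$ Lipschitz bounds on $L_{u,t}$, $L_{\lambda,t}$ and the local Lipschitz dependence on $t$ from Assumptions~\ref{ass:lipschitzAndConvexity}--\ref{ass:timevariabilityc}, from which standard existence--uniqueness theory gives the unique $C^1$ maximal solution. Your added Gr\"onwall/linear-growth step making the absence of finite escape time explicit is a welcome elaboration of what the paper leaves implicit.
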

\begin{proof}
This claim follows from the following facts: (i) the 
projection mapping is globally 
Lipschitz~\cite{gao2003exponential,YSX-JW:00}, (ii) under 
Assumptions~\ref{ass:lipschitzAndConvexity}--\ref{ass:timeRegularity}, the maps $L_{u,t}(u,y,\lambda)$ and 
$L_{\lambda,t}(y,\lambda)$  are globally Lipschitz in 
$(u,y,\lambda)$ uniformly in $t$, and locally Lipschitz 
with respect to $t$, (iii) the composition of globally 
Lipschitz functions is globally Lipschitz, and (iv) under 
Assumption~\ref{ass:timeRegularity}(a) the plant  dynamics 
are locally Lipschitz in $t$.~
\end{proof}
\vspace{.1cm}
Lemma \ref{lem:differentiableSolutions} guarantees that the 
trajectories of \eqref{eq:primal-DualProjection} are continuously 
differentiable (see Fig. 
\ref{fig:projectionIllustration}). 
Moreover, since trajectories are maximal, Lemma 
\ref{lem:differentiableSolutions} guarantees that trajectories
have no finite escape time.  
The latter property is leveraged to prove
the following result, which  establishes attractivity and forward invariance of the feasible set (see \cite[Thm 3.2]{YSX-JW:00}). 
\vspace{.1cm}

\begin{lemma}
\label{lem:differentiableSolutionsProjection}
Let Assumptions~\ref{ass:lipschitzAndConvexity}--\ref{ass:timeRegularity} hold.
If $u(t_0) \not \in \mc U$ (resp. $\lambda(t_0) \not \in \mc C$)
for some $t_0 \in \realpos$, then the trajectory $u(t)$ (resp. 
$\lambda(t)$) approaches exponentially the set $\mc U$ (resp. the set 
$\mc C$) for $t > t_0$. If $u(t_0) \in \mc U$ (resp. 
$\lambda(t_0) \in \mc C$) for some $t_0 \in \realpos$, then 
$u(t) \in \mc U$ (resp. $\lambda(t) \in \mc C$) for all $t \geq t_0$. 
\end{lemma}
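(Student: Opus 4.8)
The plan is to exploit the common projection structure of \eqref{eq:primal-DualProjection-b} and \eqref{eq:primal-DualProjection-c}. Both equations have the generic form $\dot v = P_\Omega(v + \eta F_t(v)) - v$, with $\Omega = \mc U$ and $F_t = -L_{u,t}$ in the first case and $\Omega = \mc C = \realpos^r$ and $F_t = L_{\lambda,t}$ in the second. The key observation is that the distance-to-$\Omega$ estimate I am about to derive is \emph{insensitive} to the precise form of $F_t$, and hence to the coupling with the plant state $x$ (through $y$) and between $u$ and $\lambda$. It therefore suffices to analyze this generic dynamics once and then specialize to the two sets.

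First I would introduce the candidate Lyapunov function $V(v) := \tfrac{1}{2}\norm{v - P_\Omega(v)}^2$, i.e. half the squared Euclidean distance of $v$ to $\Omega$. I would recall the standard fact that, for a closed convex set $\Omega$, this squared distance is continuously differentiable with $\nabla V(v) = v - P_\Omega(v)$. Since by Lemma~\ref{lem:differentiableSolutions} the trajectories are continuously differentiable, the composition $t \mapsto V(v(t))$ is then $C^1$ and the chain rule applies along solutions.

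Next I would differentiate $V$ along trajectories. Writing $p := P_\Omega(v)$ and $w := P_\Omega(v + \eta F_t(v))$, the dynamics read $\dot v = w - v$ while $\nabla V(v) = v - p$, so
\begin{align}
\dot V = (v-p)^\tsp(w - v) = (v-p)^\tsp(w-p) - \norm{v-p}^2. \nonumber
\end{align}
The crucial step is the variational (obtuse-angle) characterization of the Euclidean projection: because $w \in \Omega$, one has $(v - p)^\tsp(w - p) \leq 0$, which gives $\dot V \leq -\norm{v-p}^2 = -2V$. The comparison lemma then yields $V(v(t)) \leq V(v(t_0))\,e^{-2(t-t_0)}$, hence $\norm{v(t)-P_\Omega(v(t))} \leq \norm{v(t_0)-P_\Omega(v(t_0))}\,e^{-(t-t_0)}$. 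If $v(t_0) \notin \Omega$ this proves exponential attractivity of $\Omega$; if $v(t_0) \in \Omega$ then $V(v(t_0)) = 0$ forces $V(v(t)) \equiv 0$, i.e. $v(t) \in \Omega$ for all $t \geq t_0$, establishing forward invariance. Applying this to $\Omega = \mc U$ and to $\Omega = \mc C$ delivers both statements.

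I expect the only delicate point to be the justification that the squared distance to a convex set is $C^1$ with gradient $v - P_\Omega(v)$, so that $V$ is a legitimate smooth Lyapunov function and the chain rule is valid; once this and the projection inequality are in hand, the bound $\dot V \leq -2V$ and the exponential conclusion are immediate. Notably, the interconnection with the plant dynamics and the time-variability of $L_{u,t}, L_{\lambda,t}$ — which dominate the subsequent stability analysis — are irrelevant here, since the argument relies purely on the projection geometry.
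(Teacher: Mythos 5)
Your proof is correct. The paper does not actually spell out a proof of this lemma---it only remarks that the result follows ``similarly to [Thm 3.2]'' of the cited reference on projected dynamical systems---and the argument you give (the squared-distance Lyapunov function $V(v)=\tfrac{1}{2}\norm{v-P_\Omega(v)}^2$ with $\nabla V(v)=v-P_\Omega(v)$, combined with the obtuse-angle inequality to get $\dot V\leq -2V$) is precisely the standard argument behind that reference, and it correctly exploits the fact that the bound is independent of the drift term and hence of the interconnection with the plant.
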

\vspace{.1cm}

\begin{remark}
Lemma \ref{lem:differentiableSolutionsProjection} guarantees that, if 
$u(t_0) \in \mc U$, then the constraint $u(t) \in \mc U$ is 
satisfied for all $t \geq t_0$. In contrast, because the constraint
\eqref{opt:objectiveproblem-c} is dualized in 
\eqref{eq:LagrangianInequalityAugmented}, the inequality 
$K_t y(t) \leq e_t$ is guaranteed to hold only asymptotically, even 
when $K_t y(t_0) \leq e_t$ for some $t_0 \in \realpos$.~
\QEDB
\end{remark}
\vspace{.1cm}

The following lemma establishes a relationship between the 
saddle-point of the regularized Lagrangian 
\eqref{eq:LagrangianInequalityAugmented} and the equilibria  
of \eqref{eq:primal-DualProjection}. 
The proof is omitted due to space limitations.
%

% \vspace{.1cm}
% \begin{lemma}
% \label{prop:equilibriaProjectedSystem}
% Let Assumptions 
% \ref{ass:stabilityPlant}-\ref{ass:timeRegularity} hold. 
% For any $t \in \realpos$ and for any fixed $w_t \in \real^q$, 
% let 
% $\sbs{\xi}{eq} := (\sbs{x}{eq},\sbs{u}{eq},\sbs{\lambda}{eq})$
% denote an equilibrium of \eqref{eq:primal-DualProjection}, 
% and let $\xi^*_{\nu,t}$ denote the unique saddle-point
% of~\eqref{eq:LagrangianInequalityAugmented}, as defined in
% \eqref{eq:xiNu-xiNuStar}.
% Then, $\sbs{\xi}{eq}$ is unique and $\sbs{\xi}{eq} = \xi_{\nu,t}^*$.
% \end{lemma}
% \vspace{.1cm}

\vspace{.1cm}
\begin{lemma}
\label{prop:equilibriaProjectedSystem}
For any $t \in \realpos$ and for any fixed $w_t \in \real^q$, let 
$\sbs{\xi}{eq} := (\sbs{x}{eq},\sbs{u}{eq},\sbs{\lambda}{eq})$
denote an equilibrium of \eqref{eq:primal-DualProjection}.
If Assumptions \ref{ass:stabilityPlant}-\ref{ass:timeRegularity} hold, 
then $\sbs{\xi}{eq}$ is unique and it coincides with the unique 
saddle-point of~\eqref{eq:LagrangianInequalityAugmented}, as defined by
\eqref{eq:xiNu-xiNuStar}.
\end{lemma}
\vspace{.1cm}

To characterize the transient behavior of
\eqref{eq:primal-DualProjection}, we first show that, when the 
dynamics of the plant \eqref{eq:plantModel} are infinitely fast, 
the controller 
\eqref{eq:primal-DualProjection-b}-\eqref{eq:primal-DualProjection-c} 
converges exponentially to the saddle-point of the 
regularized Lagrangian, modulo an asymptotic error that depends
on the time-variability of the optimizer $z^*_t$.~
\vspace{.1cm}
\begin{proposition}
\label{prop:exponentialProjectedPrimalDualDisturbance}
Let Assumptions 
\ref{ass:stabilityPlant}-\ref{ass:timeRegularity} hold, let
$\mu := \min\{\mu_u, \nu\}$, 
$\ell =: \sqrt{2} (\overline{K} + \max \{ \ell_u + \norm{G}^2 \ell_y,
\nu \})$.
If $\varepsilon=0$ and the controller gain satisfies 
$\eta < \frac{4 \mu}{\ell^2}$, then for any $t_0 \in \realpos$:
\begin{align}
\label{eq:zBoundProjection} 
\norm{\tilde{z}_\nu(t)} \leq e^{-\frac{1}{2} \rho_z (t-t_0)} 
 \norm{\tilde{z}_\nu(t_0)} 
+ \frac{2}{\rho_z} \text{ess} \sup_{\tau \geq t_0} \norm{\dot z^*_{\nu,\tau}},
\end{align}
for all $t \geq t_0$,
where $\rho_z = \eta (\mu - \frac{\eta \ell^2}{4})$, and 
$\tilde z_\nu$ denotes the controller tracking error as in 
\eqref{eq:z-zNuTilde}.
\end{proposition}
\vspace{.1cm}

The proof of this result is postponed to
Appendix~\ref{sec:proofsProjected}. 
Proposition \ref{prop:exponentialProjectedPrimalDualDisturbance}
guarantees that \eqref{eq:primal-DualProjection} is input-to-state 
stable \cite{ES-YW:95} with respect to the time derivative of the 
optimizer (here, $\dot z^*_{\nu,\tau}$ denotes the distributional 
derivative \cite{GG:08} of $z^*_{\nu,\tau}$, see Remark 
\ref{rem:absoluteContinuity}).
Notice that the rate of convergence $\rho_z$ can be tuned by properly 
tuning the  controller gain $\eta$.

\begin{remark}
\label{rem:absoluteContinuity}
We note that, under Assumptions 
\ref{ass:stabilityPlant}--\ref{ass:timeRegularity}, the saddle-point 
trajectory $t\mapsto z_{\nu,t}^*$ is locally Lipschitz and hence 
absolutely continuous on compact sets. Thus, the essential supremum 
of $\norm{\dot z^*_{\nu,\tau}}$ is well defined.
To see this, notice that, $z^*_{\nu,\tau}$ solves the following 
Variational Inequality:

\vspace{-.3cm}
\begin{small}
\begin{align*}
&(u - u_{\nu,t}^*) (\nabla \psi_t(u_{\nu,t}^*) 
+ G^\tsp \nabla \phi_t(G u_{\nu,t}^* + H w_t) 
+ G^\tsp K_t^\tsp \lambda_{\nu,t}^*) \geq 0,\\
&(\lambda - \lambda_{\nu,t}^*) (K_t (G u_{\nu,t}^* + H w_t) - e_t - \nu \lambda_{\nu,t}^*) \geq 0,
\end{align*}
\end{small}
\vspace{-.35cm}

\noindent
which holds for all $u \in \mc U$, $\lambda \in \mc C$, and for all 
$t \in \realpos$. It follows from Assumptions 
\ref{ass:stabilityPlant}-\ref{ass:timeRegularity} and from our 
regularization method \eqref{eq:LagrangianInequalityAugmented} that 
the mapping defining the above variational inequality is locally 
Lipschitz in $(u, \lambda)$, and thus \cite[Cor. 2B.3]{AD-TR:09} 
guarantees that $z^*_{\nu,\tau}$ is locally Lipschitz. 
Hence, by Rademacher's theorem \cite[Thm. 23.2]{ED:02}, 
$t \mapsto z^*_{\nu,\tau}$ is differentiable almost everywhere (a.e.).
\QEDB
\end{remark}
\smallskip

Next, we provide a sufficient condition on the time-scale separation 
between  the plant \eqref{eq:primal-DualProjection-a} and the
feedback  controller 
\eqref{eq:primal-DualProjection-b}-\eqref{eq:primal-DualProjection-c} 
to ensure convergence to the optimal~trajectory.

\smallskip
\begin{theorem}
\label{thm:exponentialInequalityConstr}
Let Assumptions 
\ref{ass:stabilityPlant}-\ref{ass:timeRegularity} hold, let
$\ell =: \sqrt{2} (\overline{K} + \max \{ \ell_u + \norm{G}^2 \ell_y,
\nu \})$ and $\mu := \min\{\mu_u, \nu\}$. If 
\begin{align}
\label{eq:boundEpsilonInequality}
\eta  < \frac{4 \mu}{\ell^2} ~~~\text{ and }~~~
\varepsilon < 
\frac{\rho_z \underline \lambda(Q_x)}{
4 \eta \norm{P_x A^\inv B} \Psi},
\end{align}
where $\rho_z = \eta (\mu - \frac{\eta \ell^2}{4})$, $\Psi =  \rho_z \ell_y \norm{C} \norm{G}  
+ \sqrt{2} \norm{C} (\ell_y \norm{G} + \bar{K}) k_0$,
$k_0=\max\{2 + \eta(\ell_u + \ell_y \norm{G}^2),  \norm{G} \bar{K} \}$,
and $P_x, Q_x$ are as in Assumption \ref{ass:stabilityPlant},
then for any $t_0 \in \realpos$:
\begin{align}
\label{eq:xiBoundProjection}
\norm{\tilde{\xi}_\nu(t)} &\leq 
\sqrt{\kappa} \norm{\tilde{\xi}_\nu(t_0)}  e^{-\frac{1}{2} \rho_\xi (t-t_0)}+ \frac{2}{\rho_z} \text{ess} \sup_{\tau \geq t_0} \norm{\dot z^*_{\nu,\tau}} \nonumber\\
& ~~~~~~+ \frac{4 \varepsilon \norm{PA^\inv E}}{\underline \lambda(Q_x)} 
\text{ess} \sup_{\tau \geq t_0} \norm{\dot w_\tau},
\end{align}
for all $t \geq t_0$, where 
$ \rho_\xi = \frac{1}{2} \min \left\{
2\rho_z, 
\frac{1}{4 \varepsilon} 
\frac{\underline \lambda(Q_x)}{\bar \lambda(P_x)}\right\}$, 
$\kappa = \max \{\frac{1}{2}, \bar \lambda (P_x)\}/
\min \{\frac{1}{2}, \underline \lambda (P_x)\}$, 
and $\tilde \xi_\nu$ is as in \eqref{eq:xiNu-xiNuStar}.
\end{theorem}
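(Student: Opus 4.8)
The plan is to analyze \eqref{eq:primal-DualProjection} as a singularly perturbed system, viewing the plant \eqref{eq:primal-DualProjection-a} as the fast subsystem and the projected primal-dual flow \eqref{eq:primal-DualProjection-b}--\eqref{eq:primal-DualProjection-c} as the slow subsystem, and to combine the two subsystem certificates into a single composite Lyapunov function. First I would pass to boundary-layer coordinates by setting $\chi := x + A^\inv(Bu + Ew_t)$, the deviation of the plant state from its quasi-steady-state manifold; a direct differentiation gives the clean dynamics $\dot\chi = \tfrac{1}{\varepsilon} A\chi + A^\inv(B\dot u + E\dot w_t)$, in which the potentially destabilizing $O(1/\varepsilon)$ coupling to the slow variable cancels. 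The algebraic identity that drives the whole argument is $y - (Gu + Hw_t) = C\chi$, which measures exactly how far the output $y$ fed back to the controller lies from the idealized steady-state value $Gu + Hw_t$ used in Proposition \ref{prop:exponentialProjectedPrimalDualDisturbance}. Since $\tilde\xi_\nu = (x - x^*_{\nu,t}, \tilde z_\nu)$ and $(\chi, \tilde z_\nu)$ differ only by the term $A^\inv B\tilde u$, their norms are equivalent up to a constant depending on $\norm{A^\inv B}$, so it suffices to establish the decay in the $(\chi, \tilde z_\nu)$ coordinates.

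Next I would introduce the composite Lyapunov function $W := \tfrac12\norm{\tilde z_\nu}^2 + \chi^\tsp P_x \chi$, which obeys $\min\{\tfrac12, \underline\lambda(P_x)\}\,\norm{(\tilde z_\nu,\chi)}^2 \le W \le \max\{\tfrac12, \bar\lambda(P_x)\}\,\norm{(\tilde z_\nu,\chi)}^2$; this two-sided bound is the source of the constant $\kappa$. Differentiating $W$ splits into a controller part and a plant part. For the controller part I would reuse the dissipation estimate underlying Proposition \ref{prop:exponentialProjectedPrimalDualDisturbance}, which in the idealized case $\chi = 0$ gives $\tfrac{d}{dt}(\tfrac12\norm{\tilde z_\nu}^2) \le -\tfrac{\rho_z}{2}\norm{\tilde z_\nu}^2 + \norm{\tilde z_\nu}\,\norm{\dot z^*_{\nu,t}}$; the nonzero $\chi$ enters only through the substitution $y = (Gu + Hw_t) + C\chi$, and since the projection is nonexpansive and the gradients are Lipschitz (Assumption \ref{ass:lipschitzAndConvexity}), the induced perturbation of the controller vector field is bounded by $\eta\norm{C}(\ell_y\norm{G} + \bar K)\norm{\chi}$, producing a cross term of order $\norm{\tilde z_\nu}\norm{\chi}$. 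For the plant part, using Assumption \ref{ass:stabilityPlant} I obtain $\tfrac{d}{dt}(\chi^\tsp P_x\chi) = -\tfrac{1}{\varepsilon}\chi^\tsp Q_x\chi + 2\chi^\tsp P_x A^\inv B\dot u + 2\chi^\tsp P_x A^\inv E\dot w_t$, whose first term supplies the strong stabilization $-\tfrac{1}{\varepsilon}\underline\lambda(Q_x)\norm{\chi}^2$, whose last term is the disturbance channel responsible for the $\dot w$ residual, and whose middle term is the controller-to-plant coupling.

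The technical core is controlling this coupling, which requires a bound on $\norm{\dot u}$. Writing $\dot u$ as the difference between the running projection and the projection that characterizes the equilibrium $u^*_{\nu,t}$, and invoking nonexpansiveness together with the Lipschitz constants of Assumption \ref{ass:lipschitzAndConvexity}, $\norm{\dot u}$ is bounded by a multiple of $\norm{\tilde z_\nu} + \norm{C}\norm{\chi}$ in which the relevant constant is precisely $k_0 = \max\{2 + \eta(\ell_u + \ell_y\norm{G}^2),\, \norm{G}\bar K\}$. Substituting this into $2\chi^\tsp P_x A^\inv B\dot u$ contributes an additional $\norm{\chi}^2$ term and a further $\norm{\tilde z_\nu}\norm{\chi}$ cross term, both carrying the plant-side gain $\norm{P_x A^\inv B}$. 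Collecting the controller-side cross term with this plant-side coupling, and packaging the two into the aggregate coefficient $\eta\norm{P_x A^\inv B}\Psi$, I would then absorb every $\norm{\tilde z_\nu}\norm{\chi}$ term by Young's inequality, distributing it between the reservoirs $-\tfrac{\rho_z}{2}\norm{\tilde z_\nu}^2$ and $-\tfrac{1}{\varepsilon}\underline\lambda(Q_x)\norm{\chi}^2$. The bound $\eta < 4\mu/\ell^2$ keeps $\rho_z > 0$, while the stated upper bound on $\varepsilon$ is exactly what guarantees that, after this distribution, the surviving coefficient of $\norm{\chi}^2$ remains dominated by the fast stabilization; this is the step I expect to be the main obstacle, since the $\norm{\chi}^2$ and $\norm{\tilde z_\nu}\norm{\chi}$ coefficients must be tracked simultaneously so that the Young split reproduces exactly both the threshold on $\varepsilon$ and the precise form of $\Psi$.

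The outcome of the previous step is a differential inequality of the form $\dot W \le -\rho_\xi W + \norm{\tilde z_\nu}\,\norm{\dot z^*_{\nu,t}} + 2\norm{P_x A^\inv E}\,\norm{\chi}\,\norm{\dot w_t}$, with $\rho_\xi = \tfrac12\min\{2\rho_z,\, \tfrac{1}{4\varepsilon}\underline\lambda(Q_x)/\bar\lambda(P_x)\}$, the two entries of the minimum being the decay rates of the slow and fast subsystems, respectively. Finally I would pass to $\sqrt{W}$ and apply the comparison lemma exactly as in the proof of Proposition \ref{prop:exponentialProjectedPrimalDualDisturbance} (dominating the input-dependent terms with the remaining dissipation), which yields exponential decay at rate $\tfrac12\rho_\xi$ together with two steady-state residuals; converting back to the $\tilde\xi_\nu$ norm via the bounds that define $\kappa$ produces the factor $\sqrt\kappa$ and separates these residuals into the slow contribution $\tfrac{2}{\rho_z}\,\mathrm{ess}\,\sup_{\tau \ge t_0}\norm{\dot z^*_{\nu,\tau}}$ and the fast contribution $\tfrac{4\varepsilon\norm{P_x A^\inv E}}{\underline\lambda(Q_x)}\,\mathrm{ess}\,\sup_{\tau \ge t_0}\norm{\dot w_\tau}$, establishing \eqref{eq:xiBoundProjection}.
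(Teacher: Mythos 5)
Your proposal follows essentially the same route as the paper's proof: the same change of variables $\tilde x = x + A^\inv(Bu+Ew_t)$, the same composite quadratic Lyapunov function $\tfrac12\norm{\tilde z_\nu}^2 + \tilde x^\tsp P_x \tilde x$ (the paper writes it as a convex combination with weight $\theta = b/(b+g)$, but the final constants match your unweighted version), the same perturbation bound $\eta\norm{C}(\ell_y\norm{G}+\bar K)\norm{\tilde x}$ on the controller side, the same $k_0$-bound on $\norm{\dot u}$ feeding the plant--controller coupling, and the same absorption of cross terms followed by the ISS comparison lemma. The argument is correct as outlined.
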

\smallskip
The proof of this result is presented in
Appendix~\ref{sec:proofsProjected}. 
Theorem~\ref{thm:exponentialInequalityConstr} shows that, under a 
sufficient separation between the time scales of the plant and of the 
controller, the trajectories of 
\eqref{eq:primal-DualProjection} globally exponentially converge to 
$\xi_{\nu,t}^*$ (which we recall is the trajectory of the unique 
saddle-point  of the regularized Lagrangian), modulo an 
asymptotic error that 
depends on the time-variability of the optimizer and of the exogenous 
disturbance.
Precisely, Theorem \ref{thm:exponentialInequalityConstr}
guarantees that \eqref{eq:primal-DualProjection} is input-to-state 
stable \cite{ES-YW:95} with respect to $\dot z^*_{\nu,\tau}$ and 
$\dot w_\tau$, where $\dot w_\tau$ denotes the distributional 
derivative \cite{GG:08} of $w_\tau$ (notice that, under Assumption 
\ref{ass:timeRegularity}(a), $\tau \mapsto w_\tau$ is differentiable 
a.e.).

Two important observations are in order. 
First, the upper bound for $\varepsilon$ is an 
increasing function of $\underline \lambda(Q_x)$ and $\rho_z$, that are
interpreted as the convergence rate of the open-loop plant and of the 
controller with $\varepsilon=0$, respectively.
Moreover, the bound is a decreasing function of $\norm{P_x A^\inv B}$.
Since $\norm{A^\inv} \rightarrow 0$ when the eigenvalues of $A$ are 
approaching the open right complex plane, the latter 
term takes into account the margin of stability of the open-loop plant. Second, we note that the rate of convergence $\rho_\xi$ is governed by 
the quantities $\rho_z$ and $\varepsilon$ (as well as matrices $P_x$ 
and $Q_x$), which are interpreted as the rate of convergence of the 
controller with $\varepsilon=0$ and the rate of convergence of the 
open-loop plant, respectively.
\vspace{.1cm}
\begin{remark}
The bound \eqref{eq:xiBoundProjection} depends on two main 
quantities: $\text{ess} \sup_{\tau \geq t_0} \norm{\dot z^*_{\nu,\tau}}$, 
which captures  the time-variability of $z^*_{\nu,t}$, and 
$\text{ess} \sup_{\tau \geq t_0} \norm{\dot w_\tau}$, which captures 
the shift in the equilibrium of \eqref{eq:plantModel} induced by the 
time-varying exogenous input $w_t$. 
Notably, when the optimization problem \eqref{opt:objectiveproblem} 
is time-invariant and $w_t$ is constant, \eqref{eq:xiBoundProjection} 
simplifies to an exponential stability result, of the form 
$\norm{\tilde{\xi}_\nu(t)} \leq 
\sqrt{\kappa} \norm{\tilde{\xi}_\nu(t_0)}  e^{-\frac{1}{2} \rho_\xi (t-t_0)}$.
\QEDB
\end{remark}

\subsection{Extensions}
\label{sec:swithching}
Our analysis suggests that the results can be extended in different 
directions. Here, we discuss two possible extensions. 
% in the context of cyber-physical systems:
%

\vspace{0.1cm}
\subsubsection{Switched LTI Plants with Common Quadratic Lyapunov Functions} Theorem \ref{thm:exponentialInequalityConstr} can be 
extended to consider switched LTI plants 
of the form:
\begin{equation}
\begin{split}
\label{eq:plantModelSwitched}
\dot x &=  A_{\sigma} x + B_{\sigma} u + E_{\sigma} w_t, \\
y &= C_\sigma x + D_\sigma w_t,
\end{split}
\end{equation}
where $\sigma: \mathbb{R}_{\geq0}\to \mc Q$ is a switching signal 
taking values in the finite set $\mc Q$.
When all modes of \eqref{eq:plantModelSwitched} have a common equilibrium point $x_{\textrm{eq}}^*=A_{\sigma}^{-1}B_\sigma u+A_{\sigma}^{-1}E_{\sigma}w_t$ for all values of $\sigma$ and admit a common quadratic Lyapunov function $V$, the same construction for the Lyapunov function \eqref{eq:Uaux2} can be used to establish exponential ISS of the closed-loop system. Since in this case $G$ and $H$ in \eqref{eq:yTransferFunctions} are also common across the modes, the bounds in Theorem \ref{thm:exponentialInequalityConstr} still hold unchanged. 
This scenario emerges in applications where mode-dependent inner 
feedback controllers are implemented to stabilize each mode of the 
plant (so that all modes share a common equilibrium 
point~\cite{JPH-ASM:99}), but different controllers lead to different 
closed-loop transient performance. Note, however, that having a stable 
autonomous switched LTI system does not necessarily imply the existence
of a common quadratic Lyapunov function. Instead, it implies the 
existence of a common Lyapunov function that is homogeneous of degree 
2, e.g., piece-wise quadratic \cite{MartinPiceWise}. 
When matrices $C_{\sigma}$ and $D_{\sigma}$ are mode-dependent, Theorem~\ref{thm:exponentialInequalityConstr} can also be extended, 
provided that the pair $(G,H)$ remains common across modes and that 
\eqref{eq:boundEpsilonInequality} and 
\eqref{eq:xiBoundProjection} are modified to account for the 
worse-case bound among all~modes.

\subsubsection{Switched Plants with Average Dwell-Time Constraints} When the switched system \eqref{eq:plantModelSwitched} does not admit a
common Lyapunov function, it is still possible to obtain a result of 
the form \eqref{eq:xiBoundProjection}, provided the switching is slow 
``on the average''. In particular, if the switching signal $\sigma$ satisfies an average dwell-time constrain of the form
\begin{equation}
N_{\sigma}(t,\tau)\leq \eta_0(t-\tau)+N_0,    
\end{equation}
where $N_{\sigma}(t,\tau)$ denotes the number of discontinuities of 
$\sigma$ in the open interval $(\tau,t)$, $\eta_0 \in \real_{>0}$ 
denotes the switching signal dwell-time, and $N_0\geq 0$ is a chatter 
bound that guarantees that the number of consecutive switches is  finite at every time. In this case, it is possible to choose the controller gain $\eta$ sufficiently small such that the exponential stability property of the switched system is preserved, and the same construction \eqref{eq:Uaux2} carries over. This observation follows directly from the Lyapunov construction presented in \cite{bianchin2020online}, which permits the derivation of a result similar to Proposition 3.8 using quadratic Lyapunov functions. Characterizations of the conditions that emerge between $\eta$ and the time-scale separation parameters $(\varepsilon,\eta)$ can also be explicitly derived as in \cite{bianchin2020online}. However, unlike the results of \cite{bianchin2020online}, the results of this paper allow to consider online optimization problems with constraints.  To the best our knowledge, similar results for online optimization with constraints of switched systems have not been studied before.
%
%%%%%%%%%%%%%%%%%%%%%%%%%%%%%%%%%%%%%%%%%%%%%%%%%%%
\section{Online Primal-Dual Gradient Flow}
\label{sec:no_projection} 
In this section, we consider the problem of regulating 
\eqref{eq:plantModel} to the solution of the following optimization 
problem:
\begin{subequations}
\label{opt:objectiveproblemEquality}
\begin{align}
\label{opt:objectiveproblemEquality-a}
(u^*_t, y^*_t) := 
\underset{\bar u \in \real^m, ~ \bar y \in \real^p}{\arg \min}
& 
\phi_t(\bar u) + \psi_t(\bar y),\\
\label{opt:objectiveproblemEquality-b}
\text{s.t.} ~~~~~  & \bar y = G \bar u+Hw_t, \,\,\,\, 
K_t \bar y = e_t,
\end{align}
\end{subequations}
which contains only equality constraints on the system outputs.
In contrast with the method proposed in Section 
\ref{sec_projected_systems}, which guarantees tracking of an 
approximate optimizer, in this section we will show that, when the 
optimization problem includes only equality constraints, we can 
guarantee tracking of the exact optimizer (this behavior is achieved 
without resorting to a regularized Lagrangian).

\subsection{Controller Synthesis}
We begin by imposing  the following  assumption.
\vspace{.1cm}
\begin{assumption}
\label{ass:rankConditionConstraints}
The columns of $K_t G$ are linearly independent and there exists 
$\underline{k}, \bar k \in \real_{>0}$ such that
$\underline{k} I \preceq K_t G G^\tsp K_t^\tsp \preceq \bar k I$ for all $t$. \QEDB
\end{assumption}
\vspace{.1cm}
Since problem \eqref{opt:objectiveproblemEquality} contains 
only equality constraints, Assumption 
\ref{ass:rankConditionConstraints} is sufficient to
guarantee  uniqueness of the optimal multipliers~
\cite{qu2018exponential}. 
In what follows, for notation simplicity we will state the 
results by considering a time-invariant constraint matrix $K$. The 
stated results directly extend to the case of time-varying matrices, 
as noted in pertinent remarks.

We consider the following Lagrangian function 
for~\eqref{opt:objectiveproblemEquality}: 
\begin{align*}
\mc L_t(u,\lambda) = \phi_t(u) + \psi_t(Gu+Hw_t) + \lambda^\tsp(K(Gu+Hw_t)-e_t),
\end{align*}
where  $\lambda \in \realpos^r$ is the vector of dual 
variables. Under Assumptions 
\ref{ass:lipschitzAndConvexity} and 
\ref{ass:rankConditionConstraints},
the unique minimizer $(u^*_t,y^*_t)$ of 
\eqref{opt:objectiveproblemEquality} solves the following 
Karush–Kuhn–Tucker (KKT) conditions:
\begin{align}
0 &= \nabla \phi_t(u^*_t) + G^\tsp \nabla \psi_t(Gu^*_t+Hw_t) 
+ G^\tsp K^\tsp \lambda^*_t, \nonumber\\
0 &= K (Gu^*_t+Hw_t) - e_t.
\label{eq:KKTequalityConstrain}
\end{align}
To synthesize a controller, we define the following functions, which 
can be interpreted as modified gradients of the Lagrangian function:
\begin{subequations}
\begin{align}
L_{u,t}(u,y,\lambda) &:= \nabla \phi(u) + G^\tsp \nabla \psi(y) 
+ G^\tsp K^\tsp \lambda,\\
L_{\lambda,t}(y) &:= K y - e,
\end{align}
\end{subequations}
where (similarly to \eqref{eq:LuProjection}) with respect to the 
gradients of $\mc L_t(u,\lambda)$, the steady-state map $Gu+Hw_t$ has
been replaced by the variable $y$.
We then consider the following online primal-dual gradient 
controller applied to the plant \eqref{eq:plantModel}:
\begin{subequations}
\label{eq:primal-DualController}
\begin{align}
\label{eq:primal-DualController-a}
\varepsilon\dot x &= A x + B u + E w_t, ~~~~y= C x + D w_t,\\    
\label{eq:primal-DualController-b}
\dot u &= - \eta_u L_{u,t}(u,y,\lambda),\\
\label{eq:primal-DualController-c}
\dot \lambda &= \eta_\lambda L_{\lambda,t}(y),
\end{align}
\end{subequations}
where $\varepsilon, \eta_u, \eta_\lambda \in \real_{>0}$ are plant and 
controller gains. 
Similarly to the projected controller in Section 
\ref{sec_projected_systems},  the controller 
\eqref{eq:primal-DualController-b}--\eqref{eq:primal-DualController-c} 
uses output-feedback from the plant, and does not require any 
knowledge on $w_t$. In the following lemma, we relate the time-varying 
equilibria of \eqref{eq:primal-DualController} with the solution of 
\eqref{opt:objectiveproblemEquality}.
To this aim, in what follows we use the notation:
\begin{align}
\label{eq:z-zStar}
z &:= (u, \lambda), &
z^*_t &:= (u^*_t, \lambda^*_t), &
\tilde z &:= z - z^*_t,
\end{align}
to denote the controller state, the saddle-point of $\mc L_t(u,\lambda)$, and the controller tracking error, respectively.
Similarly, we use
\begin{align}
\label{eq:xi-xiStar}
\xi &:= (x, z), &
\xi^*_{t} &:= (x^*_{t}, z^*_{t}), &
\tilde{\xi} &=\xi-\xi_{\nu,t}^*,
\end{align}
to denote the joint state of \eqref{eq:primal-DualController}, the 
saddle-point of $\mc L_t(u,\lambda)$, with  
$x_{t}^*=-A^\inv(Bu_{\nu,t}^*+Hw_t)$, and the joint plant and 
controller tracking error, respectively.

% \vspace{0.1cm}
% \begin{lemma}
% \label{lem:equilibriaNonProjected}
% Let Assumptions 
% \ref{ass:stabilityPlant}--\ref{ass:rankConditionConstraints} hold.
% For any $t \in \realpos$ and for any fixed $w_t \in \real^q$, let 
% $\sbs{\xi}{eq} := (\sbs{x}{eq}, \sbs{u}{eq}, \sbs{\lambda}{eq})$
% denote an equilibrium of \eqref{eq:primal-DualController} and let 
% $\xi_t^*$ be the unique solution of  \eqref{eq:KKTequalityConstrain},
% as defined in \eqref{eq:xi-xiStar}.
% Then, $\sbs{\xi}{eq}$ is unique and  $\sbs{\xi}{eq} = \xi_t^*$.
% % $\sbs{x}{eq} = x_{t}^*$,
% % $\sbs{u}{eq} = u_{t}^*$, and 
% % $\sbs{\lambda}{eq}=\lambda_{t}^*$.
% \end{lemma}
% \vspace{0.1cm}
\vspace{0.1cm}
\begin{lemma}
\label{lem:equilibriaNonProjected}
For any fixed $w_t \in \real^q$, let 
$\sbs{\xi}{eq} := (\sbs{x}{eq}, \sbs{u}{eq}, \sbs{\lambda}{eq})$
denote an equilibrium of \eqref{eq:primal-DualController}.
If Assumptions  
\ref{ass:stabilityPlant}--\ref{ass:rankConditionConstraints} hold, then
$\sbs{\xi}{eq}$ is unique and it coincides with  the unique solution of
\eqref{eq:KKTequalityConstrain}.
\end{lemma}
\vspace{0.1cm}

%
% \begin{proof}
% \textit{(If)} From $\dot{x}^*=0$ in \eqref{eq:primal-DualController-a}  we obtain $y^*=Gu^*+Hw$. Then, by using $\dot u^*=\dot\lambda^*=0$ and  by substituting $y^*=Gu^*+Hw$ into \eqref{eq:primal-DualController-b}-\eqref{eq:primal-DualController-c}, we conclude that the pair $(u^*,\lambda^*)$ satisfies  \eqref{eq:KKTequalityConstrain}, and thus $(u^*,y^*)$ is the global minimizer of  \eqref{opt:objectiveproblemEquality}. 

% \textit{(Only If)}: Let $(u^*,y^*,\lambda^*)$ be the solution of the  KKT conditions \eqref{eq:KKTequalityConstrain}. Since $y^*=Gu^*+Hw$, we conclude that $(u^*,y^*,\lambda^*)$ satisfies $\dot u^*=\dot \lambda^*=0$ in  \eqref{eq:primal-DualController-b}-\eqref{eq:primal-DualController-c}. To show that $\dot x^*=0$, we combinine $y^*=Gu^*+Hw$ with  $y^*=Cx^*+Dw^*$ to obtain: $Cx^*=-CA^\inv Bu^* - CA^\inv Ew$. Since by Assumption \ref{ass:stabilityPlant} the columns of $C$ are linearly independent, $C$ admits a left inverse and thus $x^*$ satisfies $x^*=-A^\inv Bu^* - A^\inv Ew$, which implies $\dot x^*=0$ and proves the claim.
% \end{proof}

The proof of this claim is omitted due to space limitations. 
Differently from Lemma \ref{prop:equilibriaProjectedSystem} that 
guarantees equivalence between the equilibrium point of the controlled 
system and an approximate optimizer (defined as the saddle point of the
augmented Lagrangian), Lemma~\ref{lem:equilibriaNonProjected} 
establishes that the equilibrium point 
of~\eqref{eq:primal-DualController} coincides with the exact optimizer
(namely, the saddle point of the (non-augmented) Lagrangian).

\subsection{Stability and Tracking Analysis}
We now investigate the transient behavior of the controlled 
system~\eqref{eq:primal-DualController}. We begin by showing
that, when \eqref{eq:plantModel} is infinitely fast, 
\eqref{eq:primal-DualController} converges exponentially to the 
solution of~\eqref{opt:objectiveproblem}.
\vspace{.1cm}
\begin{proposition}
\label{prop:exponentialPrimalDual}
Let Assumptions 
\ref{ass:stabilityPlant}--\ref{ass:rankConditionConstraints} hold, let 
\begin{align}
\label{eq:Pz}
P_{z} := \begin{bmatrix}
\ell I & G^\tsp K^\tsp \\
K G & \ell \frac{\eta_u}{\eta_\lambda} I
\end{bmatrix},
\end{align}
where $\ell := \ell_u + \norm{G}^2 \ell_y$. If $\varepsilon=0$ and the 
controller parameters are such that 
$\eta_u > \frac{4 \bar k}{\ell \mu} \eta_\lambda$, 
then  for any $t_0 \in \realpos$:
\begin{align}
\norm{\tilde z(t)} &\leq \sqrt{\kappa} \norm{\tilde z(t_0)} 
e^{-\frac{1}{2} \rho_z (t-t_0)} 
+ \frac{ 4 \norm{P_z} \sqrt{\kappa}}{\underline 
\lambda (P_z)} \text{ess} \sup_{\tau \geq t_0} \norm{\dot z^*_\tau},
\label{eq:tracking-saddle-flow-equality}
\end{align}
for all $t \geq t_0$, 
$\rho_z := \frac{1}{2} 
\min \{\eta_\lambda {\underline{k}}/{\ell}, 
\eta_u \frac{\mu}{2} \}$, $\kappa = \bar \lambda(P_z)/\underline \lambda(P_z)$, where $\tilde z$ denotes the controller tracking 
error as in \eqref{eq:z-zStar}.
\end{proposition}
\smallskip

The proof of this result is presented in Appendix 
\ref{sec:proofsNonProjected}. 
Proposition \ref{prop:exponentialPrimalDual}
guarantees that \eqref{eq:primal-DualProjection} is input-to-state 
stable \cite{ES-YW:95} with respect to $\dot z^*_{\tau}$.
Two comments are in order. First, differently from 
\cite[Theorem 1]{qu2018exponential}, 
Proposition \ref{prop:exponentialPrimalDual} shows that $\rho_z$ can 
be made arbitrarily large by properly tuning the parameters $\eta_u$ 
and $\eta_\lambda$. Second, we note that the tracking result~\eqref{eq:tracking-saddle-flow-equality} is in the spirit of~\cite[Section~6]{cisneros2020distributed}; however, in~\cite{cisneros2020distributed} the primal-dual dynamics are assumed to be differentiable with respect to $t$ (in contrast, we require  milder conditions of absolute continuity).

\vspace{0.1cm}
\begin{remark}
\label{rem:timeVaryingK}
When the matrix $K$ is time-varying, then $P_{z}$ in~\eqref{eq:Pz} and the coefficient $\kappa$ in~\eqref{eq:tracking-saddle-flow-equality} are also time-varying. In 
this case, the result~\eqref{eq:tracking-saddle-flow-equality} extends 
by replacing $\kappa$ with $\sup_{\tau} \kappa_\tau$ and the 
coefficient $\frac{ 4 \norm{P_z} \sqrt{\kappa}}{\underline 
\lambda (P_z)}$ with
$\sup_\tau \frac{4 \norm{P_{z,\tau}} \sqrt{\kappa_\tau}}{\underline 
\lambda (P_{z,\tau})}$.
\QEDB
\end{remark}
\vspace{0.1cm}

We now present sufficient conditions on the time-scale separation 
between the plant and controller dynamics that result in exponential 
stability properties of the system~\eqref{eq:primal-DualController}. 

\vspace{0.1cm}
\begin{theorem}{\textit{(Stability and Tracking of~\eqref{eq:primal-DualController})}}
\label{thm:exponentialEqualityConstr}
Let Assumptions 
\ref{ass:stabilityPlant}--\ref{ass:rankConditionConstraints} hold and 
let $P_x, Q_x$ be as in Assumption \ref{ass:stabilityPlant}.
Suppose that $\varepsilon$ satisfies 
\begin{align}
\label{eq:lowerBoundRhoZ}
\varepsilon < 
\frac{\rho_z \underline \lambda(P_x) \underline \lambda(P_{z})}{
16 \sigma_1 \sigma_2 
+ 4 \rho_z \underline \lambda(P_{z}) \sigma_3},
\end{align}
where $P_z$, $\rho_z$ are as in Proposition 
\ref{prop:exponentialPrimalDual}, and
\begin{align*}
\sigma_1 &:= 2 \eta_u \ell_y \norm{C} \norm{G}(\ell + \norm{KG}) + 2 \eta_\lambda \norm{G^\tsp K^\tsp K C} + 2 \ell \eta_u \norm{KC}, \\
\sigma_2 &:= 2 \eta_u \ell \norm{P_x A^{-1} B} + 2 \eta_u \norm{P_x A^{-1} G G^\tsp K^\tsp},\\
\sigma_3 &:= 2 \eta_u \ell_y \norm{C} \norm{P_x A^{-1} B G^\tsp}.
\end{align*}
Then, for any $t_0 \in \realpos$, 
the tracking error \eqref{eq:xi-xiStar} satisfies:
\begin{align}
\norm{\tilde \xi(t)} \leq &
\sqrt{\kappa} \norm{\tilde \xi(t_0)} e^{-\frac{1}{2} \rho_\xi (t-t_0)} 
+ \frac{ 4 \norm{P_z} \sqrt{\kappa}}{
\rho_z \underline \lambda (P_z)} 
\text{ess} \sup_{\tau \geq t_0} \norm{\dot z^*_\tau} \nonumber\\
& 
+ \frac{4 \norm{P_x A^\inv E} \sqrt{\kappa}}{\underline \lambda(Q_x)} 
\text{ess} \sup_{\tau \geq t_0} \norm{\dot w_\tau},
\label{eq:iss-equality}
\end{align}
for all $t\geq t_0$, 
 $\kappa = \max \{\bar \lambda (P_x), \bar \lambda(P_z)\}/
\min \{\underline \lambda (P_x), \underline \lambda(P_z)\}$, 
\begin{align}
\rho_\xi = \frac{1}{4} \min \left\{
\rho_z \frac{\underline \lambda(P_z)}{\bar \lambda(P_z)},
\varepsilon^\inv \frac{\underline \lambda(Q_x)}{\bar \lambda(P_x)}
\right\}. 
\end{align}
\end{theorem}
\vspace{.1cm}
The proof of this result is postponed to Appendix 
\ref{sec:proofsNonProjected}.
Precisely, Theorem \ref{thm:exponentialEqualityConstr}
guarantees that \eqref{eq:primal-DualProjection} is input-to-state 
stable \cite{ES-YW:95} with respect to $\dot z^*_{\nu,\tau}$ and 
$\dot w_\tau$.
The bound on $\varepsilon$ is 
an increasing function of $\underline \lambda(P_x)$ and 
$\rho_z \underline \lambda(P_z)$, which are the convergence rates of 
the open-loop plant and of the controller with $\varepsilon=0$, 
respectively.
Moreover, we note that the rate of convergence $\rho_\xi$ is governed 
by the quantities $\rho_z$ and $\varepsilon$ (as well as matrices 
$P_x$, $Q_x$, and $P_z$), which are interpreted as the rates of 
convergence of the controller with $\varepsilon=0$ and the rate of 
convergence of the open-loop plant. 
Finally, we note that the bound~\eqref{eq:iss-equality} can be readily 
extended to account for time-varying matrices $K_t$ by adopting a 
reasoning similar to that in  Remark~\ref{rem:timeVaryingK}.

\begin{figure}[t]
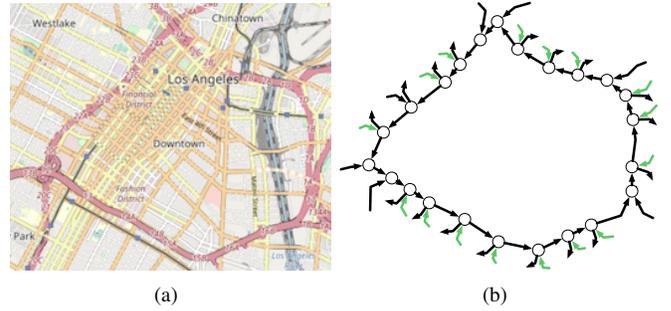

\centering \subfigure[]{\includegraphics[width=.48\columnwidth]{%
LosAngeles}} 
\centering \subfigure[]{\includegraphics[width=.48\columnwidth]{%
LA_graph}} 
\caption{(a) Portion of highway system in Los Angeles,~CA,~USA. 
(b) Network schematic. The network models 
$\vert \mc L \vert =64$ traffic highways and links colored in green 
represent controllable on-ramps.}
\label{fig:LAnetwor}
\vspace{-.5cm}
\end{figure}

\section{Application to Ramp Metering Control}
\label{sec_application_metering}
In this section, we apply the proposed framework to the control of 
on-ramps in a network of traffic highways\footnote{The code used
in our simulations is publicly available at 
\url{https://github.com/gianlucaBi/onlinePrimalDual_rampMetering}.}. 

To describe the traffic evolution, we adopt a continuous-time version 
of the Cell-Transmission Model (CTM) \cite{CFD:95}. 
We model a traffic network as a directed graph $\mc G=(\mc V, \mc L)$, 
where $\mc V$ models the set of traffic junctions (nodes) and  
$\mc L \subseteq \mc V \times \mc V$ models the set of highways 
(links). 
We partition the set of links into three disjoint sets:
$\mc L = \sbs{\mc L}{on} \cup \sbs{\mc L}{off} \cup \sbs{\mc L}{in}$, 
where $\sbs{\mc L}{on}$ denotes the set of on-ramps where vehicles can 
enter the network, $\sbs{\mc L}{off}$ denotes the set of off-ramps where 
vehicles  can exit the network, and $\sbs{\mc L}{in}$ denotes the set of 
internal links.

For $i \in \mc L$, we denote by $i^+$ the set of downstream links, and 
by $i^-$ the set of upstream links.
For all $i \in \mc L$, we let $\map{x_i}{\realpos}{\realpos}$ be the 
density of vehicle in the link. We model the dynamics of all links 
$i \in \sbs{\mc L}{in}$ according to the CTM with first-in-first-out 
(FIFO) allocation policy \cite{CFD:95}:
\begin{align}
\label{eq:CTM}
% \label{eq:CTM-a}
\dot x_i &= 
-\sps{f}{out}_i(x) + \sps{f}{in}_i(x),\nonumber\\
% \label{eq:CTM-b}
\sps{f}{out}_i(x) & = 
\min \{d_i(x_i), \{s_j(x_j)/r_{ij}\}_{j \in i^+}\}, \nonumber\\
% \label{eq:CTM-c}
d_i(x_i) &= \min \{ \varphi_i x_i, \sps{d}{max}_i\},
% \label{eq:CTM-d}
s_i(x_i) = \min \{ \beta_i (\sps{x}{jam}_i-x_i), \sps{s}{max}_i\},
\nonumber\\
% \label{eq:CTM-e}
\sps{f}{in}_i(x) &= \sum_{j \in i^-} \sps{f}{out}_j(x),
\end{align}
where $\map{d_i}{\realpos}{\realpos}$ and
$\map{s_i}{\realpos}{\realpos}$ are the link demand and supply 
functions, respectively, $r_{ij} \in [0,1]$ is the routing ratio
from $i$ to $j$, with $\sum_j r_{i j}=1$, $\varphi_i>0$. 
In our simulations, we used identical and uniform routing ratios 
at each junction. We refer to Fig. \ref{fig:LAnetwor} for an 
illustration of the network topology used in our simulations, and to 
Fig. \ref{fig:demandAndSupply} for a description of the parameters 
that characterize demand and supply.
For simplicity, all links are 
assumed to be identical.
The dynamics of on-ramps and off-ramps coincide with those of 
\eqref{eq:CTM}, where inflow and outflow functions are replaced by:
\begin{align}
\label{eq:flowsOnOffRamps}
\sps{f}{in}_i(x) &:= u_i, & & \text{if } i \in \sbs{\mc L}{on},\nonumber\\
\sps{f}{out}_i(x) &:= d_i(x_i), & & \text{if } i \in \sbs{\mc L}{off},
\end{align}

\begin{figure}[t]
\raisebox{-17mm}{
\centering \subfigure[]{\includegraphics[width=.4\columnwidth]{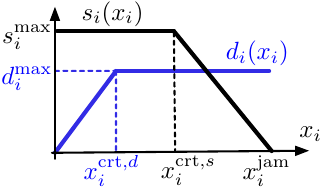}}}
\centering \subfigure[]{
\addstackgap[5pt]{ 		% intruduces space below table
\small{
\begin{tabular}{|c c c c|} 
\hline
Variable \!\!\!\!\!\! & Description & Value & Unit\\
 \hline\hline
$\varphi_i$ & free-flow speed & 4 & km/min\\ 
\hline
$\beta_i$ & back propag. speed & 4 & km/min\\ 
\hline
$\sps{d}{max}_i$ & demand saturation  & $120$ & veh/min\\ 
\hline
$\sps{s}{max}_i$ & supply saturation & $120$ & veh/h\\ 
\hline
$\sps{x}{jam}_i$ & jam density & $60$ & veh/km\\ 
\hline
$\sps{x}{crt,d}_i$ & critical density of $d_i$ & $30$ & veh/km\\ 
\hline
$\sps{x}{crt,s}_i$ & critical density of $s_i$ & $30$ & veh/km\\ 
\hline
-- & avg. numb. of lanes & $4$ & none\\ 
\hline
\end{tabular}
}
}
}
\caption{(a) Demand and supply functions. (b) Parameters description.}
\vspace{-.2cm}
\label{fig:demandAndSupply}
\end{figure}

We assume the availability of measurements that provide a noisy
estimate of the traffic densities in the highways: $y_i = x_i + w_i, \text{ for all } i \in \mc L$,
where $\map{w_i}{\realpos}{\real}$. Finally, we define the network 
throughput as the sum of all exit flows from the off-ramps $\Phi(x) := \sum_{i \in \sbs{\mc L}{off}} \sps{f}{out}_i(x)$. The  on-ramp metering problem is formalized as follows.
\begin{problem}{\textit{(Ramp Metering)}}
\label{prob:2-rampMetering}
Given a vector of on-ramp flow demands $\sbs{u}{ref} \in \real^m$, 
select the set of metered flows on the on-ramps  $(u_1, \dots, u_m)$ 
such that $u$ and $x$ minimize the cost $(u-\sps{u}{ref})^\tsp Q_u (u-\sps{u}{ref}) - \Phi(x)$, subject to the constraints \eqref{eq:CTM}-\eqref{eq:flowsOnOffRamps},
%\begin{align*}
%% \label{eq:rampMeteringCost}
%\min_{u,x} ~~~&(u-\sps{u}{ref})^\tsp Q_u (u-\sps{u}{ref}) - %\Phi(x),\\
%\text{s.t.}~~~ & \eqref{eq:CTM}-\eqref{eq:flowsOnOffRamps},
%\end{align*}
where $Q_u\in \real^{n\times n}$ is symmetric and positive definite.~
\QEDB
\end{problem}

We compare three control strategies, described next.

\label{sec_application_metering-B}
\subsubsection{Online Primal-Dual Controller}
To solve Problem \ref{prob:2-rampMetering}, we assume that 
for all $i \in \mc L$, the inequality
$\sps{d}{max}_i \leq \sps{s}{max}_j$ holds for all $j \in i^+$. 
Under this assumption, if the network is operated in a regime in 
which $x_i \leq \min\{\sps{x}{crt,$d$}_i, \sps{x}{crt,$s$}_i\}$ 
for all $i \in \mc L$ (i.e., all highways operate in the free-flow 
regime), then the dynamics \eqref{eq:CTM} simplify to the 
following linear model:
\begin{align}
\label{eq:CTM-linear}
\dot x_i &= 
-\sps{f}{out}_i(x) + \sps{f}{in}_i(x),\nonumber\\
\sps{f}{out}_i(x) & = \varphi_i x_i,  \quad\quad
\sps{f}{in}_i(x) = \sum_{j \in i^-} \sps{f}{out}_j(x).
\end{align}
In vector form, \eqref{eq:CTM-linear} can be written as
$\dot x = (R^\tsp-I) F x + Bu$, and 
$y = x + w$, where $R := [r_{ij}]$, and 
$F := \text{diag}(\varphi_1, \dots , \varphi_n)$.
Notice that matrix $(R^\tsp-I)F$ is Hurwitz (see e.g. 
\cite[Theorem 1]{GB-FP:19-tits}). Building on this, we propose the 
following problem:
\begin{align}
\label{opt:rampMetering2}
\min_{u,y} \;\;\; & (u-\sps{u}{ref})^\tsp Q_u (u-\sps{u}{ref}) 
- \Phi(y),\nonumber\\
\text{s.t.} \;\;\;  & y = -((R^\tsp -I)F)^\inv B u + w,\nonumber\\
& u_i \geq 0, \quad
y_i \leq \min\{\sps{x}{crt,$d$}_i, \sps{x}{crt,$s$}_i\}, 
\forall i \in \mc L.
\end{align}
The optimization problem \eqref{opt:rampMetering2} formalizes the 
objectives of the ramp metering problem, while guaranteeing that all 
highways are operated in the free-flow regime.

\subsubsection{Distributed Reactive Metering using ALINEA}
ALINEA \cite{MP-AK:02} is a distributed  metering strategy that 
has received considerable interest thanks to its simplicity of 
implementation and to its effectiveness.
Given a controllable on-ramp $i \in \sbs{\mc L}{in}$, ALINEA is a 
reactive
controller that takes the form 
$\dot u_i = \sum_{j \in i^+} K_j (\hat x_j - x_j)$, where 
$\hat x_i \in \realpos$ is a desired setpoint and $K_j$ are 
tunable controller gains. In our simulations, we let the setpoint be
$\hat x_i = \min\{\sps{x}{crt,$d$}_i, \sps{x}{crt,$s$}_i\}$.

\begin{figure}[t]
\centering \subfigure[]{\includegraphics[width=.5\columnwidth]{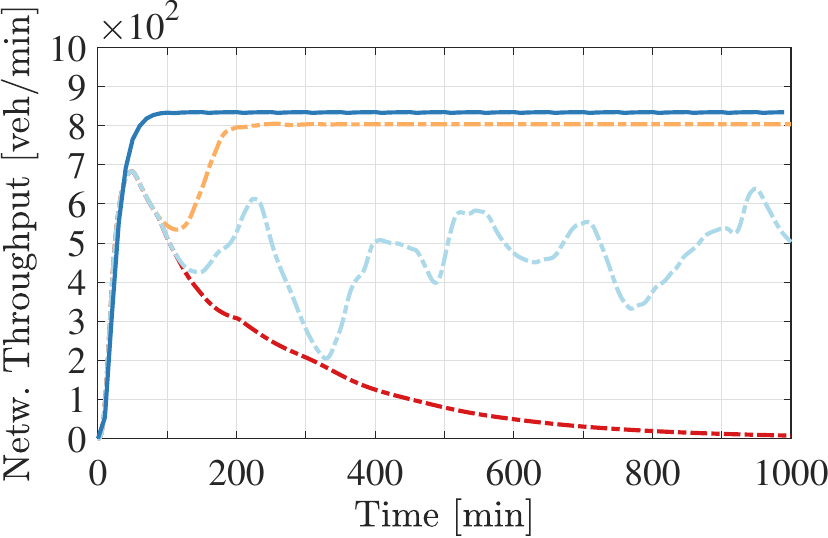}}%
\centering \subfigure[]{\includegraphics[width=.5\columnwidth]{%
costrViolation_noiseFree}} 
\caption{\!\!\!\!
Plant without noise. 
(a) Network throughput $\Phi(x)$. 
(b) Constraint violation computed as 
$\Vert y - \min \{ \sps{x}{crt,$d$}, \sps{x}{crt,$s$}\}\Vert$.}
\label{fig:noiseFree}
\end{figure}

\begin{figure}[t]
\centering \subfigure[]{\includegraphics[width=.52\columnwidth]{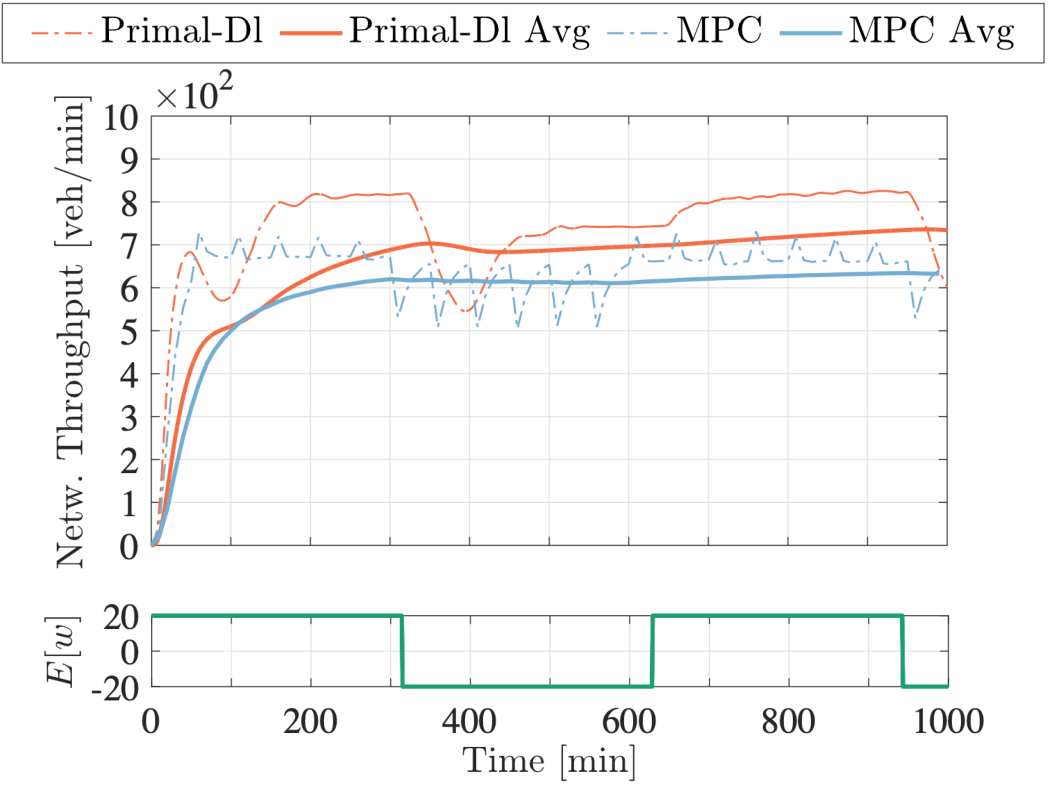}}%
\centering \subfigure[]{\includegraphics[width=.48\columnwidth]{%
cnstrViol_noise1}} 
\caption{Plant subject to random noise (green line shows noise 
mean). 
(a) Throughput $\Phi(x)$. 
(b) Constraint violation: 
$\Vert y - \min \{ \sps{x}{crt,$d$}, \sps{x}{crt,$s$}\}\Vert$.}
\label{fig:noise2}
\vspace{-.4cm}
\end{figure}

\subsubsection{Model Predictive Control (MPC)}
MPC is a receding-horizon control algorithm that computes an 
optimal control input based on a prediction of the system’s 
future trajectory according to the system’s dynamics.
We consider a formulation of MPC where the  optimization problem is 
solved every $T_s \in \real_{>0}$ time instants with prediction horizon
$T_p \in \real_{>0}$, with 
$T_p>T_s$.
In our simulations, we discretized the dynamics 
with $T_p=200$ min, $T_s=50$ min, and we used the cost 
function 
$\sum_{k=0}^{T_p}(u(k)-\sps{u}{ref})^\tsp Q_u (u(k)-\sps{u}{ref}) - \Phi(x(k))$.

\subsubsection*{Discussion}
Fig. \ref{fig:noiseFree} compares the performance of the three 
controllers in the noiseless case (i.e., where $w_i=0$ at all times 
for all $i \in \mc L$).
The simulation demonstrates that our method and MPC achieve the 
largest network throughput, outperforming ALINEA. Moreover, the 
constraint violation plot (right figure) shows that both our method 
and MPC are able to maintain the network in a regime
near the free-flow conditions.
Notice that, while for MPC this regime is precisely 
modeled through the prediction equations, the 
primal-dual controller maintains the system in such 
regime thanks to the constraints in \eqref{opt:rampMetering2}.
Finally, although ALINEA largely outperforms absence of on-ramp 
metering control, it critically suffers from its distributed 
architecture, making it suboptimal.

Fig. \ref{fig:noise2} compares the performance of our controller with 
that of  MPC in a scenario with time-varying output disturbance 
(depicted in green).
The simulation suggests that there are two main benefits in 
adopting primal-dual controllers as compared to MPC: (i) because the 
primal-dual controller uses 
instantaneous feedback from the system, it can react faster to 
unmodeled dynamics or time-varying disturbances, and (ii) in 
contrast with MPC where an optimization problem must be solved to 
convergence at the beginning of every time-window $[0,T_s]$, the 
primal-dual controller performs only one gradient-like step at every 
time.

%%%%%%%%%%%%%%%%%%%%%%%%%%%%%%%%%%%%%%%%%%%
\section{Conclusions}
\label{sec_conclusions}
We have leveraged online primal-dual dynamics  to develop an output controller that regulates an LTI plant to the solution of a time-varying optimization problem. For optimization problems with input constraints and output inequality constraints, we leveraged an augmented Lagrangian 
function and established exponential convergence to an approximate solution of the optimization problem.
For optimization problems with output equality constraints, we established exponential convergence to an interval around the exact optimal solution trajectory. Our convergence bounds capture the time-variability of the optimal  solution due to time-varying costs and constraints as well as the variation of the exogenous input. %

% see lines 4473-4476 for roman numbering of appendix sections
\counterwithin{theorem}{section}        % Adds letter to Lemma-Theorem environments
\appendices

\section{Analysis of Projected Saddle-Point Controller}
\label{sec:proofsProjected}
In this section, we present the proof of Proposition \ref{prop:exponentialProjectedPrimalDualDisturbance} and Theorem \ref{thm:exponentialInequalityConstr}. 
For the subsequent analysis, it is convenient to define the following time-varying map:
\begin{align}
\label{eq:saddle-point-map}
\hspace{-.2cm} F_t(z) := 
\left[ \hspace{-.1cm}
\begin{array}{c}
\nabla \phi_t(u) + G^\tsp \nabla \psi_t(Gu+Hw_t) + G^\tsp K_t^\tsp \lambda \\
     -\left(K_t(Gu+Hw_t) - e - \nu \lambda\right)
\end{array}
\hspace{-.1cm} \right] . \hspace{-.1cm}
\end{align}

\subsubsection{Proof of Proposition 
\ref{prop:exponentialProjectedPrimalDualDisturbance}} 
We consider only the case where the \text{ess-sup} in \eqref{eq:zBoundProjection} is bounded since otherwise the bound  holds trivially.
Recall that $z := (u, \lambda)$. We note that, when 
$\varepsilon=0$, the dynamics \eqref{eq:primal-DualProjection} can be 
rewritten as:
\begin{align}
\label{eq:projectedZmapping}
\dot z &= P_{\Omega} \big(z-\eta F_t(z)\big) - z,
\end{align}
where $\Omega := \mc U \times \mc C$. Proposition 
\ref{prop:exponentialProjectedPrimalDualDisturbance} leverages this 
structure as well as four auxiliary lemmas. 
The following lemma follows directly from 
\cite[Lemma 6]{NKD-SZK-MRJ:17} and \cite{JC-SN:19}.

\begin{lemma}
\label{lem:linearGradient}
Let Assumption \ref{ass:lipschitzAndConvexity} hold. 
Then, for any $t\geq0$, $u,  u' \in \real^m$ and $y,  y' \in \real^p$, 
there exist symmetric matrices 
$T_{u,t} \in \real^{m \times m}$ and 
$T_{y,t} \in \real^{p \times p}$, which satisfy
$\mu_u I \preceq T_{u,t}  \preceq \ell_u I$ and 
$0 \preceq T_{y,t} \preceq \ell_y I$, such that
$\nabla \phi_{t}(u) - \nabla \phi_{t}(u') = T_{u,t} (u-u')$ and 
$\nabla \psi_{t}(y) - \nabla \psi_{t}(y') = T_{y,t} (y-y')$. 
\end{lemma}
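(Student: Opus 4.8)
The plan is to prove the two statements by a single argument, since the claim about $I_t$ is the $\mu\!=\!0$ specialization of the claim about $J_t$: I treat $J_t$ with the pair $(\mu_u,\ell_u)$, and the bound $0 \preceq T_{y,t}\preceq \ell_y I$ then follows verbatim upon replacing $(\mu_u,\ell_u,J_t)$ by $(0,\ell_y,I_t)$ and using only convexity in place of strong convexity. Fix $t$ and set $d := u-u'$ and $g := \nabla J_t(u)-\nabla J_t(u')$. If $d=0$ then $g=0$ and any admissible matrix works, so I take $T_{u,t}=\mu_u I$; hence assume $d\neq 0$. The problem is thus reduced to the following finite-dimensional fact: construct a symmetric $T_{u,t}$ with $T_{u,t}\,d=g$ and $\mu_u I \preceq T_{u,t}\preceq \ell_u I$.

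The key analytic input is three inequalities supplied pointwise in $t$ by Assumption~\ref{ass:lipschitzAndConvexity}(b),(c): $\mu_u$-strong convexity gives $\langle g,d\rangle \geq \mu_u\norm{d}^2$; $\ell_u$-Lipschitzness of $\nabla J_t$ gives $\norm{g}\leq \ell_u\norm{d}$; and their combination yields the strengthened co-coercivity (Baillon--Haddad in the convex case) $\langle g,d\rangle \geq \frac{\mu_u\ell_u}{\mu_u+\ell_u}\norm{d}^2 + \frac{1}{\mu_u+\ell_u}\norm{g}^2$. This last inequality is the crucial ingredient that pins down exactly the spectral window $[\mu_u,\ell_u]$.

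The construction then proceeds in an orthonormal basis whose first vector is $d/\norm{d}$. Writing $c := \langle g,d\rangle/\norm{d}^2 \in [\mu_u,\ell_u]$ for the component of $g$ along $d$, and letting $b$ be the rescaled orthogonal component of $g$, I form the block-symmetric completion $\left[\begin{smallmatrix} c & b^\tsp\\ b & \beta I\end{smallmatrix}\right]$, which by design sends $d$ to $g$; transforming back by the orthogonal change of basis gives a symmetric $T_{u,t}$ with the same spectrum. It remains to pick $\beta\in[\mu_u,\ell_u]$ placing all eigenvalues in $[\mu_u,\ell_u]$. Reducing to the two-dimensional block spanned by $d$ and $b$, the eigenvalue constraints are equivalent to $(c-\mu_u)(\beta-\mu_u)\geq \norm{b}^2$ and $(\ell_u-c)(\ell_u-\beta)\geq\norm{b}^2$, i.e.\ to nonemptiness of the interval $[\,\mu_u+\tfrac{\norm{b}^2}{c-\mu_u},\ \ell_u-\tfrac{\norm{b}^2}{\ell_u-c}\,]$, which holds iff $\norm{b}^2\leq (c-\mu_u)(\ell_u-c)$. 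Substituting $\norm{g}^2/\norm{d}^2 = c^2+\norm{b}^2$ into the co-coercivity inequality produces precisely $\norm{b}^2\leq (c-\mu_u)(\ell_u-c)$, closing the argument; the degenerate cases $c=\mu_u$ or $c=\ell_u$ force $b=0$ and are immediate.

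I expect the main obstacle to be regularity rather than algebra. The assumptions furnish only $C^{1}$ cost functions with Lipschitz (not $C^2$) gradients, so the familiar averaged-Hessian representation $T_{u,t}=\int_0^1 \nabla^2 J_t\big(u'+s\,d\big)\,\mathrm{d}s$ is not directly available, since the Hessian need not exist along the segment $[u',u]$. The explicit symmetric completion above sidesteps this entirely, trading the integral representation for the co-coercivity inequality, and is the reason the conclusion holds under the stated $C^{1,1}$ regularity alone.
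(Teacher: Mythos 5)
Your proof is correct, but it takes a genuinely different route from the paper, which offers no self-contained argument at all: it simply asserts that the lemma ``follows directly from'' \cite[Lemma 6]{NKD-SZK-MRJ:17} and \cite{JC-SN:19}, where the underlying mechanism is the averaged-Hessian (mean-value) representation $T_{u,t}=\int_0^1 \nabla^2\phi_t(u'+s(u-u'))\,ds$. Your symmetric-completion construction replaces that integral representation with the interpolation inequality $(\mu_u+\ell_u)\langle g,d\rangle \geq \norm{g}^2+\mu_u\ell_u\norm{d}^2$ (Baillon--Haddad applied to $\phi_t-\tfrac{\mu_u}{2}\norm{\cdot}^2$), and your algebra is right: writing $c=\langle g,d\rangle/\norm{d}^2$ and $b$ for the rescaled orthogonal part of $g$, the interpolation inequality is exactly equivalent to $\norm{b}^2\leq (c-\mu_u)(\ell_u-c)$, which is what makes the interval of admissible $\beta$ nonempty, and the degenerate cases ($d=0$, $c\in\{\mu_u,\ell_u\}$, $\ell_u=\mu_u$) all collapse correctly. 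What your approach buys is precisely the point you flag: under Assumption~\ref{ass:lipschitzAndConvexity} the costs are only $C^{1,1}$, so the Hessian along the segment need not exist and the integral representation requires an additional a.e.-differentiability argument that the paper's citation glosses over; your construction needs nothing beyond the first-order inequalities. Two small caveats: the conclusion $0\preceq T_{y,t}$ (and hence your $\mu=0$ specialization) silently uses convexity of $\psi_t$, which Assumption~\ref{ass:lipschitzAndConvexity} does not literally state but which the paper clearly intends; and the strengthened co-coercivity is a consequence of the two underlying \emph{properties} (strong convexity plus Lipschitz gradient), not of the two displayed scalar inequalities, so the phrase ``their combination yields'' should be read accordingly.
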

\vspace{.1cm}

Although the time-varying matrices $T_{u,t}$ and $T_{y,t}$ are functions of $u, u'$ and $y, y'$, respectively, this result allows us 
to leverage the relationships $\mu_u I \preceq T_{u,t} \preceq \ell_u I$ and  $0 \preceq T_{y,t} \preceq \ell_y I$.
% which hold for all  $u,u'\in\mathbb{R}^m$, all $y,y^\prime \in\mathbb{R}^p$, and all $t\geq0$ \cite{NKD-SZK-MRJ:17}. 
Next, we show that $F_t(z)$ is strongly monotone and  globally Lipschitz continuous, uniformly in $t$.

\begin{lemma}\label{lem:strongMonotonicity}
Let Assumption \ref{ass:lipschitzAndConvexity} hold. 
Then, \eqref{eq:saddle-point-map} satisfies:
\begin{align}\label{eq:strongMonotoneMap}
(z-z')^\tsp (F_t(z)-F_t(z')) \geq \min \{ \mu_u, \nu\} \norm{z - z'}^2,
\end{align}
for all $z, z' \in \real^{m + r}$, and all $t \in \realpos$.
\end{lemma}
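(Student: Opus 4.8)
The plan is to compute the difference $F_t(z) - F_t(z')$ explicitly via the mean-value-type decomposition in Lemma~\ref{lem:linearGradient}, and then to exploit the skew-symmetric coupling between the primal and dual blocks of the saddle-point map. Writing $z = (u,\lambda)$, $z' = (u',\lambda')$, and abbreviating $\Delta u := u - u'$, $\Delta \lambda := \lambda - \lambda'$, I would first invoke Lemma~\ref{lem:linearGradient} (with $J_t = \phi_t$ and $I_t = \psi_t$) to obtain symmetric matrices $T_{u,t}$ and $T_{y,t}$ satisfying $\mu_u I \preceq T_{u,t} \preceq \ell_u I$ and $0 \preceq T_{y,t} \preceq \ell_y I$, such that $\nabla \phi_t(u) - \nabla \phi_t(u') = T_{u,t}\,\Delta u$ and, since the arguments of $\psi_t$ differ by $G\,\Delta u$, also $\nabla \psi_t(Gu+Hw_t) - \nabla \psi_t(Gu'+Hw_t) = T_{y,t}\,G\,\Delta u$. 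Substituting these into \eqref{eq:saddle-point-map}, the primal block of $F_t(z) - F_t(z')$ reads $(T_{u,t} + G^\tsp T_{y,t} G)\,\Delta u + G^\tsp K_t^\tsp \Delta \lambda$, while the dual block reads $-K_t G\,\Delta u + \nu\,\Delta \lambda$.

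Next I would form the inner product $(z-z')^\tsp(F_t(z) - F_t(z'))$ and expand it. The crucial step will be to observe that the two cross terms, namely $\Delta u^\tsp G^\tsp K_t^\tsp \Delta \lambda$ arising from the primal block and $-\Delta \lambda^\tsp K_t G\,\Delta u$ arising from the dual block, are transposes of one another (each being a scalar) and therefore cancel exactly. This cancellation is the defining structural feature of saddle-point maps and is the only nonroutine point in the argument. After it is applied, only the diagonal contributions survive:
$$(z-z')^\tsp(F_t(z)-F_t(z')) = \Delta u^\tsp T_{u,t}\,\Delta u + \Delta u^\tsp G^\tsp T_{y,t} G\,\Delta u + \nu\,\norm{\Delta \lambda}^2.$$

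Finally, I would bound each surviving term using the spectral inequalities from Lemma~\ref{lem:linearGradient}: the first term is at least $\mu_u \norm{\Delta u}^2$ because $T_{u,t} \succeq \mu_u I$; the second is nonnegative because $G^\tsp T_{y,t} G \succeq 0$ (as $T_{y,t} \succeq 0$); and the third equals $\nu \norm{\Delta \lambda}^2$. Combining these yields $(z-z')^\tsp(F_t(z)-F_t(z')) \geq \mu_u \norm{\Delta u}^2 + \nu \norm{\Delta \lambda}^2 \geq \min\{\mu_u,\nu\}\,\norm{z-z'}^2$, which is exactly the claimed strong monotonicity with modulus $\min\{\mu_u,\nu\}$. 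Since every bound holds uniformly in $t$ and for arbitrary $z,z' \in \real^{m+r}$, the conclusion follows. I do not anticipate any obstacle beyond correctly identifying the skew-symmetric cancellation; the remainder is bookkeeping of the two blocks.
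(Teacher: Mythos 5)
Your proposal is correct and follows essentially the same route as the paper: both apply Lemma~\ref{lem:linearGradient} to write the gradient differences as $T_{u,t}\,(u-u')$ and $T_{y,t}\,G\,(u-u')$, rely on the cancellation of the skew-symmetric cross terms $\Delta u^\tsp G^\tsp K_t^\tsp \Delta\lambda$ and $-\Delta\lambda^\tsp K_t G\,\Delta u$ (which the paper performs implicitly in its expansion), and conclude from $T_{u,t}\succeq \mu_u I$ and $G^\tsp T_{y,t}G \succeq 0$. No gaps.
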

\begin{proof}
By expanding the left-hand side of \eqref{eq:strongMonotoneMap}, and by using Lemma \ref{lem:linearGradient}:
\begin{align*}
(z&-z')^\tsp (F_t(z)-F_t(z') )
= (u-u')^\tsp \big(\nabla \phi_t(u) -\nabla \phi_t(u')\big) \\
&\quad + (u-u')^\tsp G^\tsp (\nabla \psi_t(Gu+Hw_t) 
- \nabla \psi_t(Gu'+Hw_t)) \\
&\quad+ \nu \norm{\lambda - \lambda'}^2\\
&= (u-u')^\tsp (T_{u,t} + G^\tsp T_{y,t} G) (u-u')
+ \nu \norm{\lambda - \lambda'}^2\\
&\geq \mu_u \norm{u-u'}^2 + \nu \norm{\lambda - \lambda'}^2\geq \min \{ \mu_u, \nu \} \norm{z-z'}^2,
\end{align*}
which proves the claim.
\end{proof}

\begin{lemma}
\label{lem:lipschitzMapping}
Let Assumptions \ref{ass:lipschitzAndConvexity} and 
\ref{ass:slaterConditionAndRank} hold. Then, the mapping  \eqref{eq:saddle-point-map}
satisfies:
\begin{align}
\label{eq:lipschitzMap}
\norm{F_t(z) - F_t(z')} \leq \ell \norm{z-z'},
\end{align}
for all $z, z' \in \real^{m + r}$, and all $t\in \realpos$, where  
$\ell =: \sqrt{2} \max \{ \ell_u + \ell_y \norm{G}^2 + 
\norm{G}\overline{K}, \nu + \overline{K}\norm{G}\})$.
\end{lemma}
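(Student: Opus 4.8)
The plan is to establish that $F_t$ is globally Lipschitz by showing that, for fixed $t$, the difference $F_t(z)-F_t(z')$ is a \emph{linear} function of $z-z'$ whose coefficient matrix has operator norm bounded uniformly in $t$ by $\ell$. Writing $z=(u,\lambda)$, $z'=(u',\lambda')$, $\delta_u := u-u'$ and $\delta_\lambda := \lambda-\lambda'$, I would first subtract the two evaluations of \eqref{eq:saddle-point-map} component-wise. The terms depending only on $t$ (namely $Hw_t$ inside the arguments of $\psi_t$ and $K_t$, and $e_t$) either cancel outright or enter the gradient differences through a common shift, so no residual $w_t$- or $e_t$-dependent term survives. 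Invoking Lemma \ref{lem:linearGradient} with the pair $(u,u')$ for $\phi_t$ and with the shifted pair $(Gu+Hw_t,\, Gu'+Hw_t)$ for $\psi_t$ (whose difference is $G\delta_u$), the gradient differences become $\nabla\phi_t(u)-\nabla\phi_t(u')=T_{u,t}\delta_u$ and $\nabla\psi_t(Gu+Hw_t)-\nabla\psi_t(Gu'+Hw_t)=T_{y,t}G\delta_u$.

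This yields $F_t(z)-F_t(z')=(a,b)$, with primal block $a := (T_{u,t}+G^\tsp T_{y,t}G)\delta_u + G^\tsp K_t^\tsp \delta_\lambda$ and dual block $b := -K_tG\,\delta_u + \nu\delta_\lambda$. I would then bound the Euclidean norm of each block separately. Using the spectral bounds $\mu_u I \preceq T_{u,t}\preceq \ell_u I$ and $0\preceq T_{y,t}\preceq \ell_y I$ supplied by Lemma \ref{lem:linearGradient}, one obtains $\norm{T_{u,t}+G^\tsp T_{y,t}G}\leq \ell_u+\ell_y\norm{G}^2$; and using $\norm{K_t}\leq\overline{K}$ from Assumption \ref{ass:timevariabilityc}, the coupling blocks $G^\tsp K_t^\tsp$ and $K_tG$ have norm at most $\overline{K}\norm{G}$. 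Since all these constants are independent of $t$, the resulting estimates are uniform in $t$, as required.

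Finally, bounding $\norm{\delta_u},\norm{\delta_\lambda}\leq\norm{z-z'}$ gives $\norm{a}\leq \alpha\norm{z-z'}$ and $\norm{b}\leq \beta\norm{z-z'}$ with $\alpha = \ell_u+\ell_y\norm{G}^2+\overline{K}\norm{G}$ and $\beta=\nu+\overline{K}\norm{G}$. Because $\norm{F_t(z)-F_t(z')}^2=\norm{a}^2+\norm{b}^2$, the elementary inequality $\alpha^2+\beta^2\leq 2\max\{\alpha,\beta\}^2$ produces exactly the factor $\sqrt{2}$ and the claimed constant $\ell=\sqrt{2}\max\{\alpha,\beta\}$ (the off-diagonal coupling thus contributes $\overline{K}\norm{G}$, which matches the stated $\overline{K}$ once $\norm{G}$ is folded into that bound). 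I expect the only delicate point to be the bookkeeping when applying Lemma \ref{lem:linearGradient} to the \emph{composed} map $u\mapsto\nabla\psi_t(Gu+Hw_t)$: the matrix $T_{y,t}$ depends on the evaluation points, so one must invoke the lemma at the shifted arguments and verify that $0\preceq T_{y,t}\preceq \ell_y I$ remains valid there uniformly in $t$; everything else reduces to routine norm estimates and the concatenation of the two blocks.
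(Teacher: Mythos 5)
Your proof is correct and follows essentially the same route as the paper's: block-wise Lipschitz estimates on the primal and dual components of $F_t(z)-F_t(z')$ (the $w_t$- and $e_t$-dependent terms cancelling), combined via an elementary $\sqrt{2}$ factor --- the paper uses $\norm{u-u'}+\norm{\lambda-\lambda'}\leq\sqrt{2}\norm{z-z'}$ where you use $\alpha^2+\beta^2\leq 2\max\{\alpha,\beta\}^2$, with the same resulting constant. If anything, your bookkeeping of the coupling blocks as $\overline{K}\norm{G}$ is more careful than the paper's, which bounds $\norm{G^\tsp K_t^\tsp}$ by $\overline{K}$ even though Assumption~\ref{ass:timevariabilityc} only controls $\norm{K_t}$; your reading that $\norm{G}$ must be folded into $\overline{K}$ for the stated constant $\ell$ to be exact is the right one.
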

\begin{proof}
Using \eqref{eq:LuProjection}, we directly obtain the bounds:
\begin{align*}
&\norm{L_{u,t}(u,Gu+Hw,\lambda) - L_{u,t}(u',Gu'+Hw,\lambda')}\\
& \quad\quad\quad\quad\quad\quad
\leq (\ell_u + \ell_y \norm{G}^2) \norm{u-u'} + 
\overline{K} \norm{G} \norm{\lambda - \lambda'},\\
&\norm{L_{\lambda,t}(Gu+Hw,\lambda) 
- L_{\lambda,t}(Gu'+Hw,\lambda')}\\
&\quad\quad \quad\quad\quad\quad
\leq \overline{K}\norm{G} \norm{u-u'} +  \nu \norm{\lambda - \lambda'}.
\end{align*}
Finally, the claim follows by using the relationship:
$\norm{u-u'}+\norm{\lambda - \lambda'} \leq \sqrt{2} \norm{z-z'}$.
\end{proof}
\smallskip

The following result establishes that the existence of an ISS-Lyapunov
function with a particular structure guarantees input-to-state 
stability with exponential convergence rate, and it is a particular 
case of \cite[Ch.~4]{HKK:96} (see also \cite{edwards2000input}).

% Check this 

\smallskip
\begin{lemma}
\label{lemm:ISSbound}Consider the  system $\dot x=f(t,x,u)$, where 
$\map{f}{\realpos \times \real^n\times \real^m}{\real^n}$ is locally 
Lipschitz in $t$, $x$, and $u$, and $t \mapsto u(t)$ is 
measurable and essentially bounded. % by $b_0$. 
If there exists a smooth 
$\map{V}{\realpos \times \real^n}{\real}$ s.t.:
\begin{subequations}
\begin{align}
\label{eq:UGES-a}
\underline a \norm{x}^2 &\leq V(t,x) \leq \bar a \norm{x}^2,\\
\label{eq:UGES-b}
\frac{d}{dt}V(t,x) &\leq - b V(t,x), 
\quad \forall \norm{x} \geq b_0>0,
\end{align}
\end{subequations}
hold a.e., then, for all $t_0 \in \realpos$  and $x(t_0) \in \real^n$:
\begin{align}
\norm{x(t)} \leq 
\sqrt{\bar a/\underline a} (
\norm{x(t_0)} e^{- \frac{1}{2}  b (t-t_0)} + b_0), \quad \forall t\geq t_0.
\end{align}
\end{lemma}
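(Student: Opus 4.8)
The plan is to run the standard Lyapunov/comparison argument for input-to-state stability, organized around a sublevel set of $V$ whose boundary the trajectory cannot cross outward. Throughout, I write $g(t) := V(t,x(t))$. Since $t \mapsto u(t)$ is measurable and essentially bounded and $f$ is locally Lipschitz, the Carath\'eodory solution $x(\cdot)$ is absolutely continuous, hence $g$ is absolutely continuous, the chain-rule derivative $\dot g(t) = \frac{d}{dt} V(t,x(t))$ exists a.e., and \eqref{eq:UGES-b} is in force wherever $\norm{x(t)} \geq b_0$.

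First I would fix the threshold $c := \bar a b_0^2$ and record two elementary consequences of \eqref{eq:UGES-a}: if $V(t,x) \leq c$ then $\underline a \norm{x}^2 \leq c$, so $\norm{x} \leq \sqrt{c/\underline a} = \sqrt{\bar a/\underline a}\, b_0$; and if $V(t,x) > c$ then $\bar a \norm{x}^2 \geq V > \bar a b_0^2$ forces $\norm{x} > b_0$, so the decay estimate \eqref{eq:UGES-b} holds precisely on the region $\{g > c\}$.

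Next I would establish forward invariance of $\{g \leq c\}$. Suppose, for contradiction, that $g(t_1) \leq c$ but $g(t_2) > c$ for some $t_2 > t_1$; set $s := \sup\{t \in [t_1,t_2] : g(t) \leq c\}$. By continuity $g(s) = c$, while on $(s,t_2]$ one has $g > c$, hence $\norm{x} > b_0$ and $\dot g \leq -bg \leq -bc < 0$ a.e. Integrating the absolutely continuous $g$ yields $g(t_2) < g(s) = c$, a contradiction. Thus once $g$ reaches level $c$ it never returns above it, and on $\{g \leq c\}$ the state obeys $\norm{x(t)} \leq \sqrt{\bar a/\underline a}\, b_0$.

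Finally I would split time at the first entry time $T := \inf\{t \geq t_0 : g(t) \leq c\}$ (possibly $T = +\infty$). On $[t_0,T)$ we have $g > c$, so $\dot g \leq -bg$ holds a.e. and the comparison lemma gives $g(t) \leq g(t_0) e^{-b(t-t_0)}$; combining with \eqref{eq:UGES-a} produces $\norm{x(t)} \leq \sqrt{\bar a/\underline a}\, \norm{x(t_0)}\, e^{-\frac{1}{2} b(t-t_0)}$. For $t \geq T$, forward invariance gives $\norm{x(t)} \leq \sqrt{\bar a/\underline a}\, b_0$. Since each partial bound is dominated by the sum $\sqrt{\bar a/\underline a}(\norm{x(t_0)} e^{-\frac{1}{2} b(t-t_0)} + b_0)$, the claimed estimate follows for all $t \geq t_0$. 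The only point demanding care is the regularity bookkeeping — justifying that $g$ is absolutely continuous and that the a.e. differential inequality legitimately drives both the comparison lemma and the invariance argument — rather than any nontrivial inequality; the remainder is routine manipulation of \eqref{eq:UGES-a}.
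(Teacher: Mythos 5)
Your proof is correct. The paper supplies no argument of its own for this lemma---it simply defers to the cited reference (Khalil, Ch.~4)---and your level-set/comparison argument (exponential decay of $V$ while $\|x\|>b_0$, forward invariance of the sublevel set $\{V\leq \bar a\, b_0^2\}$, then domination of both partial bounds by their sum) is precisely the standard proof of that ultimate-boundedness result, so the two approaches coincide.
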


\vspace{.2cm}

Using the results above, we now present the proof of 
Proposition~\ref{prop:exponentialProjectedPrimalDualDisturbance}. 
In particular, we show that the function 
$V(\tilde z_\nu) = \frac{1}{2} \norm{\tilde z_\nu}^2$ 
satisfies the assumptions of Lemma \ref{lemm:ISSbound}, where we 
recall that $\tilde z_\nu$ is as in \eqref{eq:z-zNuTilde}. 
In what follows, we let  $\hat z:= P_{\Omega} (z-\eta F_t(z))$.
By expanding the time-derivative:
% of $V(\tilde z_\nu)$ along the 
% trajectories of~\eqref{eq:projectedZmapping} we have:
\begin{align}
\label{eq:auxEq4}
\frac{d}{dt} V(\tilde z_\nu) &= 
- \tilde z_\nu^\tsp (z-\hat z) - \tilde z_\nu^\tsp \dot z^*_{\nu,t} , 
\end{align}
where we recall that $\dot z^*_{\nu,t}$ exists a.e. (see 
Remark~\ref{rem:absoluteContinuity}). 
Next, we recall that the projection operator is the unique vector 
$P_\Omega(z)$ that satisfies:
\begin{align}
\label{eq:projectionInequality}
(v' - P_\Omega(v))^\tsp(P_\Omega(v)-v) \geq 0, \text{ for all } 
v' \in \Omega.
\end{align}
By using \eqref{eq:projectionInequality} with $v' =z^*_{\nu,t}$ 
and $v = z-\eta F_t(z)$, we obtain the relationship 
$(\tilde z_\nu + \eta F_t(z))^\tsp (z - \hat z)  \geq 
\norm{z-\hat z}^2 + \eta (z -  z_{\nu,t}^*)^\tsp F_t(z)$.
Moreover, by recalling that  $\eta F_t(z)^\tsp(z-z^*) \geq 0$ 
(see Remark~\ref{rem:absoluteContinuity}), the first term in 
\eqref{eq:auxEq4} satisfies:
\begin{align}
\label{eq:auxBound}
- \tilde z_\nu^\tsp (z-\hat z) &\leq 
-\norm{z-\hat z}^2 - \eta (\hat z-z^*_{\nu,t})^\tsp F_t(z) \nonumber\\
& =
-\norm{z-\hat z}^2 
- \eta (\hat z - z^*_{\nu,t})^\tsp (F_t(z)  - F_t(z^*_{\nu,t}))\nonumber\\
&\quad\quad - \eta (\hat z - z^*_{\nu,t})^\tsp F_t(z^*_{\nu,t})\nonumber\\
& = 
-\norm{z-\hat z}^2 - \eta \tilde z_\nu^\tsp (F_t(z)-F_t(z^*_{\nu,t})) \nonumber\\
& \quad\quad+ \eta(z-\hat  z)^\tsp (F_t(z)-F_t(z^*_{\nu,t}))\nonumber\\
&\leq 
-\norm{z-\hat z}^2  
+ \eta \ell \norm{z-\hat z}\norm{\tilde z_\nu} 
- \eta \mu \norm{\tilde z_\nu}^2 \nonumber\\
&\leq 
- \eta \left( \mu - \eta \ell^2/4 \right) 
\norm{\tilde z_\nu}^2,
\end{align}
where the first equality follows by adding and subtracting 
$\eta (\hat z - z^*_{\nu,t})^\tsp F_t(z^*_{\nu,t})$, the second
equality follows by using $\hat z-z^*_{\nu,t} = (z-z^*_{\nu,t})-
(z - \hat  z)$ and by using
$(\hat z_\nu-z^*_{\nu,t})^\tsp F_t(z) \geq 0$, the fourth inequality follows 
from Lemmas \ref{lem:strongMonotonicity} and 
\ref{lem:lipschitzMapping}, 
and the last inequality follows by using  the relationship 
$2 ab \leq a^2 + b^2$ with
$a = \norm{z - \hat z}$ and 
$b = \frac{1}{2} \eta \ell \norm{\tilde z_\nu}$.
By substituting into \eqref{eq:auxEq4} we obtain:
\begin{align*}
\frac{d}{dt} V(\tilde z_\nu) & \leq
- \eta (\mu - \frac{\eta \ell^2}{4})\norm{\tilde z_\nu}^2 +
\norm{\tilde z_\nu} \norm{\dot z^*_{\nu,t}}\\
& \leq - \frac{\eta }{2} 
(\mu - \frac{\eta \ell^2}{4})\norm{\tilde z_\nu}^2,
\end{align*}
where the last inequality holds if $\norm{\tilde z_\nu} \geq \frac{2}{\eta(\mu - \eta \ell^2/4)} 
\text{ess} \sup_{\tau \geq t_0} \norm{\dot z^*_{\nu,\tau}}$. Finally, 
the  claim follows by application of Lemma \ref{lemm:ISSbound} with 
$\bar a = \underline a = \frac{1}{2}$,
$b = (\mu - \eta \ell^2/4)$, and 
$b_0 = \frac{2}{\eta(\mu - \eta \ell^2/4)} 
\text{ess} \sup_{\tau \geq t_0} \norm{\dot z^*_{\nu,\tau}}$. \hfill $\blacksquare$

\vspace{.2cm}
\subsubsection{Proof of Theorem \ref{thm:exponentialInequalityConstr}}
We consider only cases where the \text{ess-sup} in \eqref{eq:xiBoundProjection} are bounded, otherwise the bound holds trivially.
Our proof leverages singular perturbation arguments 
inspired by \cite[Ch. 11]{HKK:96}.
We first perform a change of variables for  
\eqref{eq:primal-DualProjection}.
Let $z := (u, \lambda)$, $\tilde x := x + A^\inv B u + A^\inv E w_t$,
and 
\begin{align*}
F_t(z, \tilde x) := \begin{bmatrix}
L_{u,t}(u, C \tilde x + Gu +Hw_t, \lambda)\\
L_{\lambda,t}(C \tilde x + Gu +Hw_t, \lambda)
\end{bmatrix}.
\end{align*}
Then, the dynamics \eqref{eq:primal-DualProjection} can be 
rewritten as:
\begin{align}
\label{eq:primalDualProjectNewVariab}
\varepsilon \dot{\tilde x} &= A \tilde x + \varepsilon A^\inv B S
\dot z + A^\inv E \dot w_t, \nonumber\\
\dot z &= P_\Omega(z - \eta F_t(z, \tilde x))-z,
\end{align}
where $S = [I_m, 0]$, and $\Omega = \mc U \times \mc C$.
Moreover, let
$b := \eta \norm{C} (\ell_y \norm{G} + \bar K)$, and 
$g := 2\sqrt{2} \norm{P A^\inv B} k_0$.  To prove the theorem's 
statement, we will show that
\begin{align}
\label{eq:Uaux2}
U(\tilde z_\nu, \tilde x) &:= (1-\theta) V(\tilde z_\nu) + \theta W(z),
\end{align}
where $V(\tilde z_\nu) = \frac{1}{2} \norm{z(t)-z^*_{\nu,t}}^2$, 
$W(z) = \tilde x^\tsp P_x \tilde x$, and $\theta = b/(b+g)$ 
satisfies the assumptions of Lemma \ref{lemm:ISSbound}.
We recall that $\tilde z_\nu := z-z^*_{\nu,t}$ and
$\hat z:= P_{\Omega} (z-\eta F_t(z,\tilde x))$. 
The time-derivative of $V(t,z)$ along the trajectory of
\eqref{eq:primalDualProjectNewVariab} reads:
\begin{align}
\label{eq:Vdotaux1}
\frac{d}{dt}V(\tilde z_\nu)
& = \tilde z_\nu^\tsp (\hat z-z) - \tilde z_\nu^\tsp \dot z^*_{\nu,t}
\end{align}
almost everywhere. The first term satisfies:
\begin{align*}
\tilde z_\nu^\tsp (\hat z-z) 
&=
\tilde z_\nu^\tsp (P_\Omega(z-\eta F_t(z, 0))-z) \nonumber\\
&\quad  + \tilde z_\nu^\tsp (P_\Omega(z-\eta F_t(z, \tilde x))
-P_\Omega(z-\eta F_t(z, 0)))\nonumber\\
&\leq
\tilde z_\nu^\tsp (P_\Omega(z-\eta F_t(z, 0))-z) \nonumber\\
&\quad + \eta \norm{\tilde z_\nu} \norm{F_t(z, \tilde x)- F_t(z, 0)}\\
& \leq - \eta (\mu - \frac{\eta \ell^2}{4}) \norm{\tilde z_\nu}^2
+ \eta \norm{\tilde z_\nu} \norm{F_t(z, \tilde x)- F_t(z, 0))},
\end{align*}
where the first inequality follows from the non-expansiveness of the 
projection operator, namely:
\begin{align*}
\tilde z_\nu^\tsp &(P_\Omega(z-\eta F_t(z, \tilde x)) 
-P_\Omega(z-\eta F_t(z, 0)))\\
&\quad\quad \leq
\norm{\tilde z_\nu}\norm{P_\Omega(z-\eta F_t(z, \tilde x)) 
-P_\Omega(z-\eta F_t(z, 0))}\\
&\quad\quad \leq
\norm{\tilde z_\nu}\norm{(z-\eta F_t(z, \tilde x)) 
-(z-\eta F_t(z, 0))},
\end{align*}
and the second inequality follows from 
\eqref{eq:auxBound}.
By expanding:
\begin{align*}
% \label{eq:bound2}
&\norm{F_t(z, \tilde x)- F_t(z, 0)} \nonumber\\
& \quad \leq 
\left\|
\begin{bmatrix}
G^\tsp(\nabla f_y(C \tilde x+Gu+Hw_t)-\nabla f_y(Gu+Hw_t))\nonumber\\
- K_t C \tilde x
\end{bmatrix}
\right\| \nonumber\\
& \quad \leq
\norm{C}(\ell_y \norm{G} +  \bar K) \norm{\tilde x}.
\end{align*}
Hence, by recalling the definition of $b$ and $\rho_z$, 
\eqref{eq:Vdotaux1} satisfies:
\begin{align}
\label{eq:Vdotaux}
\frac{d}{dt} V(\tilde z_\nu) &\leq 
- \rho_z \norm{\tilde z_\nu}^2 +
b \norm{\tilde x}\norm{\tilde z_\nu} + \norm{\tilde z_\nu} \norm{\dot z^*_{\nu,t}}
\nonumber\\
& \leq - \frac{\rho_z}{2} \norm{\tilde z_\nu}^2 
+ b \norm{\tilde x}\norm{\tilde z_\nu},
\end{align}
where the last inequality holds if 
$\norm{\tilde z_\nu} \geq  \frac{2}{\rho_z} \text{ess} \sup \norm{\dot z_{\nu,t}^*}$.
The time-derivative of $W(\tilde x)$ along the trajectories of 
\eqref{eq:primalDualProjectNewVariab}:
\begin{align}
\label{eq:bound3}
\frac{d}{dt} W(z) &= 
\varepsilon^\inv \tilde x^\tsp (A^\tsp P_x + P_x A) \tilde x 
\nonumber\\
& \quad\quad + 2 \tilde x^\tsp P_x A^\inv B S \dot z 
+ 2 \tilde x^\tsp P_x A^\inv E \dot w_t \nonumber\\
& \leq 
- \varepsilon^\inv \underline{\lambda}(Q_x) \norm{\tilde x}^2
+ 2 \norm{P_x A^\inv B} \norm{\tilde x} \norm{S \dot z}\nonumber\\
&\quad\quad 
+ 2 \norm{P_x A^\inv B} \norm{\tilde x} \norm{\dot w_t}.
\end{align}
By expanding the terms:
\begin{align*}
% \label{eq:bound4}
& \norm{S \dot z} 
= 
\norm{ S( P_\Omega(z - \eta F_t(z, \tilde x))-z)} \nonumber\\
&=
\norm{ S( P_\Omega(z - \eta F_t(z, \tilde x))-z 
- P_\Omega(z - \eta F_t(z^*_{\nu,t}, 0))+z^*_{\nu,t} )}
\nonumber\\
& \leq 
\eta \|L_{u,t}(u, C \tilde x +Gu+Hw_t,\lambda) \nonumber\\
&\quad\quad\quad - L_{u,t}(u^*, Gu^*+Hw_t,\lambda^*)\| + 2 
\norm{u-u^*}\nonumber\\
& \leq 
\sqrt{2} \max \{2+\eta(\ell_u+\ell_y \norm{G}^2), \norm{K_tG}\} 
\norm{\tilde z_\nu}\nonumber\\
&\quad\quad\quad+ \eta \ell_y \norm{C}\norm{G} \norm{\tilde x},
\end{align*}
where the first inequality follows from the non-expansiveness of the 
projection operator  and the second inequality follows from 
Assumption  \ref{ass:lipschitzAndConvexity}.
By recalling the definition of $g$, by letting 
$d=2 \eta \ell_y \norm{P A^\inv B} \norm{C}\norm{G}$, and by
substituting into \eqref{eq:bound3}: 
\begin{align}
\label{eq:Wdotaux}
\frac{d}{dt}  W(\tilde x) 
& \leq
- \varepsilon^\inv \underline{\lambda}(Q_x) \norm{\tilde x}^2
+ d\norm{\tilde x}^2
\nonumber\\
&\quad + g \norm{\tilde x } \norm{\tilde z_\nu} 
+ 2 \norm{P_x A^\inv B} \norm{\tilde x} \norm{\dot w_t}\nonumber\\
& \leq
- \frac{\underline{\lambda}(Q_x)}{2 \varepsilon} \norm{\tilde x}^2
+ d\norm{\tilde x}^2 + g \norm{\tilde x } \norm{\tilde z_\nu},
\end{align}
where the last inequality is satisfied if
$\norm{\tilde x} \geq 
\frac{4 \varepsilon \norm{P_x A^\inv E}}{\underline \lambda(Q_x)} 
\text{ess} \sup \norm{\dot w_t}$.
By combining \eqref{eq:Vdotaux}-\eqref{eq:Wdotaux}:
\begin{align*}
\frac{d}{dt} U (\tilde x, \tilde z_\nu) \leq - \hat \xi^\tsp  \Lambda \hat \xi -
\frac{1}{2} \min \{ 2\rho_z, \frac{\underline \lambda(Q_x)}{2 \varepsilon \bar \lambda(P_x)}\},
\end{align*}
where
\begin{align*}
\Lambda := 
\begin{bmatrix}
(1-\theta) \frac{\rho_z}{4} & -\frac{1}{2}((1-\theta)b+\theta g)\\
\frac{1}{2}((1-\theta)b+\theta g) & \theta(\frac{\underline \lambda(Q_x)}{4\varepsilon}-d)
\end{bmatrix}.
\end{align*}
$\Lambda$ is positive definite when
%\begin{align*}
$\theta (1-\theta) \frac{\rho_z}{4}(\frac{\underline \lambda(Q_x)}{4\varepsilon}-d)
%\frac{\rho_z \underline \lambda (Q_x)}{ 16 \varepsilon } 
> \frac{1}{4}((1-\theta) b + \theta g)^2$,
%\end{align*}
which holds when \eqref{eq:boundEpsilonInequality} is satisfied.
Finally, the claim follows by application of Lemma \ref{lemm:ISSbound}
with $\bar a = \max \{\frac{1}{2}, \bar \lambda (P_x)\}$, 
$ \underline a = \min \{\frac{1}{2}, \underline \lambda (P_x)\}$,
$c_3 = \frac{1}{2} \min\{ 2\rho_z, \frac{\underline{\lambda}(Q_x)}{4\varepsilon \bar \lambda(P_x)}\}$, 
and 
$b_0 = \max \{ 
\frac{2}{\rho_z}  \text{ess} \sup \norm{\dot z^*_{\nu,t}},
\frac{4 \varepsilon \norm{PA^\inv E} }{\underline \lambda(Q_x)} 
\text{ess} \sup \norm{\dot w_t}\}$.~
\QEDBL

\section{Analysis of Primal-Dual Controller}
\label{sec:proofsNonProjected}
In this section, we prove Proposition 
\ref{prop:exponentialPrimalDual} and Theorem 
\ref{thm:exponentialEqualityConstr}. We introduce the 
following  change of variables for \eqref{eq:primal-DualController}:
\begin{align*}
%\label{eq:changeOfVariables}
\tilde x &:= x - h(u,w_t), & h(u,w_t) &:= - A^\inv B u - A^\inv E w_t.
\end{align*}
The dynamics 
\eqref{eq:primal-DualController} are re-written in the new variables next.
\begin{lemma}
\label{lem:linearDynamics}
Let Assumption 
\ref{ass:stabilityPlant}-\ref{ass:rankConditionConstraints} be 
satisfied, and 
for any $t \in \realpos$, let $(u^*_t, \lambda_t^*)$ be the 
saddle-point of \eqref{opt:objectiveproblemEquality}.
The dynamics \eqref{eq:primal-DualController} have the following
equivalent representation:
\begin{align}
\label{eq:linearDynamics}
\varepsilon \dot {\tilde x} &= F_{11} \tilde x +F_{12} (u-u_t^*)
+ F_{13} (\lambda -\lambda_t^*) + F_{14} \dot w_t, \nonumber\\
\dot {u} &= F_{21} \tilde x  + F_{22} (u-u_t^*)
+ F_{23} (\lambda -\lambda_t^*), \nonumber\\
\dot {\lambda} &= F_{31} \tilde x  + F_{32}  (u-u_t^*),
\end{align}
where $F_{14} = \varepsilon A^\inv E$, 
\begin{align*}
F_{11} &= A - \varepsilon\eta_u A^\inv B G^\tsp T_{y,t} C,  &
F_{21} &= - \eta_u G^\tsp T_{y,t} C,\\
F_{12} &=  - \varepsilon\eta_u A^\inv B(T_{u,t} + G^\tsp T_{y,t} G),&
F_{23} &= - \eta_u G^\tsp K^\tsp,\\
F_{13} &= - \varepsilon\eta_u A^\inv B G^\tsp K^\tsp,&
F_{31} &= \eta_\lambda K C,\\
F_{22} &= - \eta_u (T_{u,t} + G^\tsp T_{y,t} G),&
F_{32} &= \eta_\lambda K G,
\end{align*}
and $T_{u,t}$, $T_{y,t}$ are symmetric 
matrices that satisfy $\mu_u I \preceq T_{u,t} \preceq \ell_u I$, 
$0 \preceq T_{y,t} \preceq \ell_y I$ uniformly in $t$.
\end{lemma}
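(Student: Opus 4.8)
The plan is to verify the representation \eqref{eq:linearDynamics} by differentiating the change of variables and substituting the closed-loop dynamics; the argument is a direct computation resting on two observations. The first is the algebraic identity $A\tilde x = A(x + A^\inv B u + A^\inv E w_t) = Ax + Bu + Ew_t = \varepsilon \dot x$, so that the fast subsystem reads simply $\varepsilon\dot x = A\tilde x$; this is what promotes the Hurwitz matrix $A$ to the leading block of $F_{11}$. The second is Lemma \ref{lem:linearGradient}, which expresses the gradient increments of $\phi_t$ and $\psi_t$ as $T_{u,t}(\cdot)$ and $T_{y,t}(\cdot)$ with the stated spectral bounds, thereby rendering the nonlinear controller affine in the error coordinates $u - u_t^*$ and $\lambda - \lambda_t^*$.

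I would first treat the controller equations \eqref{eq:primal-DualController-b}--\eqref{eq:primal-DualController-c}, since the resulting blocks feed into the $\tilde x$ equation. Substituting $x = \tilde x - A^\inv B u - A^\inv E w_t$ and using $G=-CA^\inv B$, $H=D-CA^\inv E$ gives $y = C\tilde x + Gu + Hw_t$. Subtracting the KKT conditions \eqref{eq:KKTequalityConstrain} from the right-hand sides introduces the errors $u-u_t^*$ and $\lambda-\lambda_t^*$; since $y - (Gu_t^* + Hw_t) = C\tilde x + G(u-u_t^*)$, Lemma \ref{lem:linearGradient} then yields $\dot u = -\eta_u G^\tsp T_{y,t}C\,\tilde x - \eta_u(T_{u,t}+G^\tsp T_{y,t}G)(u-u_t^*) - \eta_u G^\tsp K^\tsp(\lambda-\lambda_t^*)$ and $\dot\lambda = \eta_\lambda KC\,\tilde x + \eta_\lambda KG(u-u_t^*)$. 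These are precisely the rows with coefficients $F_{21}, F_{22}, F_{23}$ and $F_{31}, F_{32}$.

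For the fast coordinate I differentiate $\tilde x$ to obtain $\varepsilon\dot{\tilde x} = \varepsilon\dot x + \varepsilon A^\inv B\dot u + \varepsilon A^\inv E\dot w_t$ and replace $\varepsilon\dot x$ by $A\tilde x$. Inserting the expression for $\dot u$ from the previous step, the $\tilde x$ terms combine into $F_{11} = A - \varepsilon\eta_u A^\inv B G^\tsp T_{y,t}C$, while the remaining contributions are $F_{12}=\varepsilon A^\inv B F_{22}$, $F_{13}=\varepsilon A^\inv B F_{23}$, and $F_{14}=\varepsilon A^\inv E$, matching the stated matrices. The spectral bounds $\mu_u I \preceq T_{u,t} \preceq \ell_u I$ and $0 \preceq T_{y,t} \preceq \ell_y I$ carry over verbatim from Lemma \ref{lem:linearGradient}.

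There is no genuine obstacle here beyond careful bookkeeping, and in particular tracking where the factor $\varepsilon$ appears. The coupling blocks $F_{12}, F_{13}, F_{14}$ carry an explicit $\varepsilon$ because they originate from the terms $\varepsilon A^\inv B\dot u$ and $\varepsilon A^\inv E\dot w_t$ in the $\varepsilon$-scaled fast equation, whereas $F_{21}, F_{22}, F_{23}, F_{31}, F_{32}$ do not. This asymmetric $\varepsilon$-dependence is exactly the structure the singular-perturbation analysis of Theorem \ref{thm:exponentialEqualityConstr} relies on, so it is worth recording the identity $\varepsilon\dot x = A\tilde x$ explicitly rather than folding it silently into the derivation.
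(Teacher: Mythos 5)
Your proposal is correct and follows essentially the same route as the paper: the same change of variables $\tilde x = x + A^{-1}Bu + A^{-1}Ew_t$, the same use of Lemma \ref{lem:linearGradient} together with the KKT conditions \eqref{eq:KKTequalityConstrain} to linearize the controller in the error coordinates, and the same substitution of $\dot u$ into $\varepsilon\dot{\tilde x}=A\tilde x+\varepsilon A^{-1}B\dot u+\varepsilon A^{-1}E\dot w_t$ to obtain $F_{11}$ through $F_{14}$. Your explicit remark that $\varepsilon\dot x=A\tilde x$ is a useful clarification of a step the paper performs silently (and with a small typo), but it is not a different argument.
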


\begin{proof}
By application of Lemma \ref{lem:linearGradient}:  
\begin{align*}
\dot {u} &= - \eta_u L_{u,t}(u,y,\lambda) +
\underbrace{\eta_u L_{u,t}(u^*_t,Gu_t^*+Hw_t,\lambda_t^*)}_{=0}\\
&= -\eta_u ((T_{u,t} + G^\tsp T_{y,t} G) (u-u_t^*) \\
& \quad\quad \quad\quad 
+ G^\tsp T_{y,t} C \tilde x
+ G^\tsp K^\tsp (\lambda-\lambda_t^*)),\\
\dot \lambda &= \eta_\lambda  L_{\lambda,t}(u,y,\lambda) - 
\underbrace{\eta_\lambda \nabla_\lambda  L_{\lambda,t}(u^*,Gu_t^*+Hw_t,\lambda_t^*)}_{=0}\\
&= \eta_\lambda (KC\tilde x + K G (u-u_t^*)).
\end{align*}
Finally, by using the relationships
$\varepsilon \dot {\tilde x} =  \dot x 
- \varepsilon \frac{\partial h}{\partial u} \dot u
- \varepsilon \frac{\partial h}{\partial w} \dot w_t$,  and by 
substituting the expression for $\dot {u}$:
\begin{align*}
\varepsilon \dot {\tilde x} &= A \tilde x 
+ \varepsilon A^\inv B \dot {u }
+ \varepsilon A^\inv E \dot w_t\\
&= (A - \varepsilon \eta_u A^\inv B G^\tsp T_{y,t} C) \tilde x 
- \varepsilon\eta_u A^\inv B G^\tsp K^\tsp  (\lambda-\lambda^*_t)\\
&\quad - \varepsilon\eta_u A^\inv B(T_{u,t} + G^\tsp T_{y,t} G) (u-u^*_t) 
+ \varepsilon A^\inv E \dot w_t,
\end{align*}
which proves the claim.
\end{proof}

\subsubsection{Proof of Proposition \ref{prop:exponentialPrimalDual}}
The proof follows similar ideas as \cite[Lemma 2]{qu2018exponential}. 
By letting $\varepsilon=0$ in \eqref{eq:linearDynamics} we obtain 
$A \tilde x=0$, which, by Assumption 
\ref{ass:stabilityPlant} implies $\tilde x =0$. 
Hence, we let $z := (u, \lambda)$ and $\tilde z := z - z_t^*$, and we 
rewrite the dynamics \eqref{eq:linearDynamics} as
$\dot{z} =  F_z (z-z_t^*) = F_z \tilde z$, where 
\begin{align}
\label{eq:Fz}
F_z = \begin{bmatrix} F_{22} & F_{23}\\ F_{32} & 0\end{bmatrix}.
\end{align}

We will prove that 
$V(z) = {\tilde z}^\tsp P_z \tilde z$ satisfies the assumptions of Lemma \ref{lemm:ISSbound}.
By the Schur Complement, $P_z$ is positive definite if 
and only if 
$\ell^2 \frac{\eta_u}{\eta_\lambda} I - G^\tsp K^\tsp K G 
\succ 0$.
Using $\eta_u > \frac{4 \bar k}{\ell \mu} \eta_\lambda$, 
$\ell \geq \mu$ and Assumption \ref{ass:rankConditionConstraints} one gets $\ell^2 \frac{\eta_u}{\eta_\lambda} I - G^\tsp K^\tsp K G 
\succeq 
((4 \ell \bar k)/\mu_u) I - \bar k I 
\succeq 3  \bar k \succ 0$, which shows that $P_z$ is positive definite.
By expanding the time-derivative:
\begin{align}
\label{eq:auxVdotW5}
\frac{d}{dt} V(\tilde z) &= 
(\dot z - \dot z_t^*)^\tsp P_z (z - z_t^*)
+  (z - z_t^*)^\tsp P_z (\dot z - \dot z_t^*)\nonumber\\
&= \tilde z^\tsp (F_z^\tsp P_z + P_z F_z)  \tilde z
- 2 \tilde z^\tsp P_z \dot z_t^*.
\end{align}
Next, we show that 
$\tilde z^\tsp (F_z^\tsp P_z + P_z F_z)  \tilde z 
+ \bar \rho_z  V(\tilde z) \leq 0$,
where $\bar \rho_z = \min \{\eta_\lambda \frac{\underline{k}}{\ell}, 
\eta_u \frac{\mu}{2} \}$.
Let $M:= F_z^\tsp P_z +  P_z F_z + \bar\rho_z P_z$.
By expanding the product, $M = [M_{ij}]$ is a $2 \times 2$ 
block symmetric matrix with~blocks:
\begin{align}
\label{eq:Mblocks}
M_{11} &=  2 \eta_u \ell (T_{u,t}+G^\tsp T_{y,t} G) 
- 2 \eta_\lambda G^\tsp K^\tsp K G - \bar \rho_z \ell I,\nonumber\\
M_{12} &=  \eta_u (T_{u,t}+G^\tsp T_{y,t} G)^\tsp G^\tsp K^\tsp 
- \bar \rho_z G^\tsp K^\tsp,\nonumber\\
M_{22} &= 2 \eta_u K G G^\tsp K^\tsp 
- \bar \rho_z \ell (\eta_u/\eta_\lambda) I,
\end{align}
and $M_{21}=M_{12}^\tsp$.
By application of the Schur Complement, $M$ is positive 
definite when $M_{22} \succ 0$ and 
$M_{11}-M_{12} M_{22}^\inv M_{12}^\tsp \succ 0$.
The first condition can be rewritten as:
%\begin{align*}
$M_{22} 
\succeq 
(2 \eta_u \underline{k} - \bar \rho_z \ell \frac{\eta_u}{\eta_\lambda}) I
\succeq
\eta_u \underline k I 
\succ 0$, 
%\end{align*}
where we used Assumption \ref{ass:rankConditionConstraints} and the 
expression of $\rho_z$.
For the second condition, we have:
\begin{align*}
M_{12} & M_{22}^\inv M_{12}^\tsp 
\preceq
M_{12} (\eta_u KG G^\tsp K^\tsp)^\inv M_{12}^\tsp \\
&= 
\eta_u (T_{u,t}+G^\tsp T_{y,t} G)^\tsp (T_{u,t}+G^\tsp T_{y,t} G) 
+ \frac{\bar \rho_z^2}{\eta_u}  I\\
&\quad\quad - \bar \rho_z ((T_{u,t}+G^\tsp T_{y,t} G)^\tsp + (T_{u,t}+G^\tsp T_{y,t} G))\\
& \preceq \eta_u \ell (T_{u,t}+G^\tsp T_{y,t} G)
+ \frac{\bar \rho_z^2}{\eta_u} I - 2 \bar \rho_z (T_{u,t}+G^\tsp T_{y,t} G),
\end{align*}
where the first bound follows from Assumption 
\ref{ass:rankConditionConstraints} and the definition of 
$\bar \rho_z$,
the second identity follows from 
$G^\tsp K^\tsp (K G G^\tsp K^\tsp)^\inv K G = I$, and the
last bound follows from $G^\tsp T_{y,t} G \succeq 0$.
Thus:
\begin{small}
\begin{align*}
&M_{11}- M_{12} M_{22}^\inv M_{12}^\tsp \succeq 
2 \eta_u \ell (T_{u,t}+ G^\tsp T_{y,t} G) 
- 2 \eta_\lambda G^\tsp K^\tsp K G \\
& \quad - \bar \rho_z \ell I 
- \eta_u \ell (T_{u,t}+ G^\tsp T_{y,t} G)
- \frac{\bar \rho_z^2}{\eta_u} I + 2 \bar \rho_z (T_{u,t}+G^\tsp T_{y,t} G),
\end{align*}
\end{small}
and, by using 
\begin{align*}
\frac{1}{2} \eta_u \ell &(T_{u,t}+G^\tsp T_{y,t} G) 
- 2 \eta_\lambda G^\tsp K^\tsp KG\\
&\succeq (\frac{1}{2} \eta_u \ell \mu_u
- 2 \eta_\lambda \bar k) I \succ 0    \\
\frac{1}{2} \eta_u \ell &(T_{u,t}+G^\tsp T_{y,t} G) - \rho \ell I \succeq (\frac{1}{2} \eta_u \ell \mu
- \rho \ell) I \succeq 0\\
\eta_u \ell &(T_{u,t}+G^\tsp T_{y,t} G) - \eta_u \ell (T_{u,t}+G^\tsp T_{y,t} G) =0,
\end{align*}
we conclude $M_{11}-M_{12} M_{22}^\inv M_{12}^\tsp \succ 0$,
which shows $M \succ 0$.

As a result, \eqref{eq:auxVdotW5} satisfies:
\begin{align}
\label{eq:VboundAux3}
\frac{d}{dt} V(\tilde z) 
&\leq 
- \bar \rho_z V(\tilde z)
+ 2 \norm{\tilde z} \norm{P_z} \norm{\dot z_t^*}\nonumber\\
&= 
- \frac{\bar \rho_z}{2} V(\tilde z)
- \frac{\rho_z}{2} \underline \lambda(P_z) \norm{\tilde z}^2 
+ 2 \norm{\tilde z} \norm{P_z} \norm{\dot z_t^*}\nonumber\\
& \leq 
- \frac{\bar \rho_z}{2}  V(\tilde z),
\end{align}
where the last inequality holds when
$2 \norm{\tilde z} \norm{P_z} \norm{\dot z_t^*} 
- \frac{\rho_z}{2} \underline \lambda(P_z) \norm{\tilde z}^2 \leq 0$,
or 
$\norm{\tilde z} \geq \frac{4 \norm{P_z}}{\rho_z \underline \lambda(P_z)}
\text{ess} \sup_\tau \norm{\dot z^*_\tau}$.
Finally, the claim follows by application of Lemma \ref{lemm:ISSbound}
with $\bar a = \bar \lambda(P_z),
\underline a = \underline \lambda(P_z)$, 
$b = \frac{\bar \rho_z}{2}$,  and
$b_0 = \frac{4 \norm{P_z}}{\rho_z \underline \lambda(P_z)}
\text{ess} \sup_\tau \norm{\dot z_\tau^*}$. 
\QEDBL

\smallskip
\subsubsection{Proof of Theorem \ref{thm:exponentialEqualityConstr}}
Our proof technique leverages singular perturbation arguments 
inspired by~\cite[Ch. 11]{HKK:96}.
Let $z := (u, \lambda)$, $\tilde z := z - z_t^*$ and rewrite the 
dynamics \eqref{eq:linearDynamics} as:
\begin{align}
\label{eq:auxDynamicsWithW}
\dot{ \tilde x} = F_{11} \tilde x +F_{xz} \tilde z + F_{14} \dot w_t, \,\,\,\, \dot{ \tilde z} = F_{zx} \tilde x + F_z \tilde z, 
\end{align}
where $F_z$ is as defined by  \eqref{eq:Fz}, 
$F_{xz} = [F_{12}, F_{13}]$, and
$F_{zx} = [F_{21}^\tsp, F_{31}^\tsp]^\tsp$. 
To show this claim, we will prove that the function 
$U(\tilde x,\tilde z) = (1-\theta) V(\tilde z) + \theta W(\tilde x)$, 
where $\theta = \norm{\sigma_1}/(\norm{\sigma_2} + 
\norm{\sigma_1})$ satisfies the assumptions of Lemma 
\ref{lemm:ISSbound}.
By substituting \eqref{eq:auxDynamicsWithW} and by using 
$F_z^\tsp P_z + P_z F_z \preceq - \bar \rho_z P_z$ 
(see \eqref{eq:auxVdotW5} and \eqref{eq:VboundAux3}):
\begin{align*}
\dot V(\tilde z) 
&= 
\tilde z^\tsp (F_z^\tsp P_z + P_z F_z) \tilde z 
+ 2 \tilde x^\tsp F_{zx} P_z \tilde z  
- 2 \tilde z^\tsp P_z \dot z^*\\
& \leq 
- \rho_z \tilde z^\tsp P_z \tilde z 
+ \tilde z^\tsp \sigma_1 \tilde x
- 2 \tilde z^\tsp P_z \dot z^*\\
& \leq 
- \frac{\rho_z}{2} \underline \lambda( P_z)  \norm{\tilde z}^2
+ \norm{\sigma_1} \norm{\tilde z} \norm{\tilde x},
\end{align*}
the last inequality holds when
$\norm{\tilde z} \geq \frac{4 \norm{P_z}}{\rho_z \underline 
\lambda(P_z)} \sup_\tau \norm{\dot z^*_\tau}$.
Next, by expanding the time-derivative of $W(\tilde x)$:
\begin{align*}
\varepsilon \dot W(\tilde x) 
&= 
\tilde x^\tsp (F_{11}^\tsp P_x + P_x F_{11}) \tilde x
+ 2 \tilde x^\tsp P_x F_{xz} \tilde z 
+ 2 \tilde x^\tsp P_x F_{14} \dot w_t.
\end{align*}
Using $F_{11} = A - \varepsilon\eta_u A^\inv B G^\tsp T_{y,t} C$, 
$A^\tsp P_x + P_x A = - Q_x$:
\begin{align*}
\tilde x^\tsp &(F_{11}^\tsp P_x + P_x F_{11}) \tilde x 
= 
- \tilde x^\tsp Q_x \tilde x \\
& - \eta_u \varepsilon \tilde x^\tsp (C^\tsp T_{y,t} G B^\tsp 
A^{\negat \tsp} P_x + P_x A^\inv B G^\tsp T_{y,t} C) \tilde x.
\end{align*}
Let $\Sigma_1 := 2 P_z [F_{21}^\tsp, F_{31}^\tsp]^\tsp$,
$\Sigma_2 := 2 \varepsilon^\inv P_x [F_{12}, F_{13}]$,  
$\Sigma_3 = \eta_u (C^\tsp T_{y,t} G B^\tsp 
A^{\negat \tsp} P_x + P_x A^\inv B G^\tsp T_{y,t} C)$, and $\Sigma_4 = P_x A^\inv E$. Then,
\begin{align}
\label{eq:myAuxW}
\varepsilon \dot W(\tilde x) 
& \leq
- \underline \lambda(Q_x) \norm{\tilde x }^2
+ \varepsilon \norm{\sigma_2} \norm{\tilde x} \norm{\tilde z}\\
& \quad\quad + \varepsilon \norm{\Sigma_3} \norm{\tilde x}^2
+ 2 \norm{\Sigma_4} \norm{\tilde x} \norm{\dot w_t}
\nonumber \\
& \leq
- \frac{\underline \lambda(Q_x)}{2} \norm{\tilde x }^2
+ \varepsilon \norm{\Sigma_2} \norm{\tilde x} \norm{\tilde z}
+ \varepsilon \norm{\Sigma_3} \norm{\tilde x}^2, \nonumber 
\end{align}
where the last inequality holds if
$- \frac{\underline \lambda(Q_x)}{2} \norm{\tilde x }^2 
+ 2 \norm{\Sigma_4} \norm{\tilde x} \norm{\dot w_t} \leq 0$,
or $\norm{\tilde x} \geq 
\frac{4 \norm{\Sigma_4}}{\underline \lambda(Q_x)} 
\text{ess} \sup_{\tau \geq 0} \norm{\dot w_\tau}$.
By using $V(z) \leq \bar \lambda(P_z) \norm{\tilde z}^2$,
$W(z) \leq \bar \lambda(P_x) \norm{\tilde x}^2$, 
by letting $\hat \xi :=(\norm{\tilde z}, \norm{\tilde x})$, and by
combining \eqref{eq:VboundAux3}-\eqref{eq:myAuxW} we get $\dot U (\tilde x, \tilde z) \leq - \hat \xi^\tsp  \Lambda \hat \xi -
\rho_\xi U (\tilde x, \tilde z)$,
where:
\begin{align*}
\Lambda \!\!=\!\! \begin{bmatrix}
(1-\theta)\frac{\rho_z \underline \lambda(P_z)}{4}  
& - \frac{1}{2}((1-\theta) \norm{\Sigma_1} + \theta \norm{\Sigma_2})\\
- \frac{1}{2}((1-\theta) \norm{\Sigma_1} + \theta \norm{\Sigma_2}) 
\!\!\!\!\!\!\!
& \theta (\frac{\underline \lambda (Q_x)}{4 \varepsilon}  - 
\norm{\Sigma_3})
\end{bmatrix}.
\end{align*}
Matrix $\Lambda$ is positive definite when 
\begin{align*}
\theta (1-\theta)
\frac{\rho_z \underline \lambda(P_z)}{4}
(\frac{\underline \lambda (Q_x)}{4 \varepsilon}  \!- \!\norm{\Sigma_3})
% \frac{ \rho_z \underline \lambda(P_z)\underline \lambda (Q_x)}{16 \varepsilon} &
&> \frac{1}{4}((1-\theta) \norm{\Sigma_1} \!+\! \theta \norm{\Sigma_2})^2,
\end{align*}
which holds when the following is satisfied: 
\begin{align*}
\varepsilon <
\frac{\rho_z \underline \lambda(P_x) \underline \lambda(P_z)}{
16 \norm{\Sigma_1} \norm{\Sigma_2} 
+ 4 \rho_z \underline \lambda(P_z)\norm{\Sigma_3}}.
\end{align*}
The bound~\eqref{eq:lowerBoundRhoZ} is then obtained using standard manipulations.  Finally, the claim follows by application of Lemma \ref{lemm:ISSbound}
with 
$\bar a = \max \{\bar \lambda (P_x), \bar \lambda(P_z)\}$, 
$\underline a = \min \{\underline \lambda (P_x), \underline 
\lambda(P_z)\}$,
$b = \frac{1}{4} \min \left\{
\rho_z \frac{\underline \lambda(P_z)}{\bar \lambda(P_z)},
\varepsilon^\inv \frac{\underline \lambda(Q_x)}{\bar \lambda(P_x)}
\right\}$, and 
$b_0 = \max \{ 
\frac{4 \norm{P_z}}{\rho_z \underline \lambda(P_z)} 
\text{ess} \sup_\tau \norm{\dot z_\tau^*},
\frac{4 \norm{\Sigma_4}}{\underline \lambda(Q_x)} 
\text{ess} \sup_\tau \norm{\dot w_\tau}
\}$.
\QEDBL
%

%%%%%%%%%%%%%%%%%%%%%%%%%%%%%%%%%%%%%%%%%%
\section*{Acknowledgments}
The authors would like to thank the anonymous Reviewers and the  
Associate Editor for the constructive review comments.

\bibliographystyle{IEEEtran}
\bibliography{bib/alias,bib/bibliography,bib/bibliography2,bib/GB,bib/New}

\end{document}